\newtheorem{theorem}{Theorem}
\newtheorem{definition}[theorem]{Definition}
\newtheorem{lemma}[theorem]{Lemma}
\newtheorem{prob}[theorem]{Problem}
\newtheorem{corollary}[theorem]{Corollary}
\newtheorem{conj}[theorem]{Conjecture}
\newtheorem{cl}[theorem]{Claim}
\def\cA{{\mathcal A}}
\def\cC{{\mathcal C}}
\def\cG{{\mathcal G}}
\def\qed
 \ifmmode\eqno{\Box}\fi}
\def\cL{{\mathcal L}}
\def\cM{{\mathcal M}}
\def\Gn{{\mathcal G}(n)}
\def\Gnn{{\mathcal G}(n, n)}
\def\Gnp{{\mathcal G}(n, p)}
\def\Gnnp{{\mathcal G}(n, n, p)}
\def\Gnkmin{{\mathcal G}(n, k, {\rm min})}
\def\Gnnkmin{{\mathcal G}(n, n, k, {\rm min})}
\def\Gnkmax{{\mathcal G}(n, k, {\rm max})}
\def\Gnnkmax{{\mathcal G}(n, n, k, {\rm max})}
\def\Gnkavg{{\mathcal G}(n, k, {\rm avg})}
\def\Gnnkavg{{\mathcal G}(n, n, k, {\rm avg})}
\def\GnkKn{{\mathcal G}(n, k, {\rm Kneser})}
\def\rmin{{f_{\min}}}
\def\rmax{{f_{\max}}}
\DeclareMathOperator{\avg}{avg}
\DeclareMathOperator{\diam}{diam}
\def\ravg{{f_{\avg}}}
\def\fKn{{f_{\rm Kneser}}}
    \def\Kne{{\rm Kneser}}
\def\fPra{{f_{\rm Pra}}}
\begin{document}

\normalsize

\title{{\bf \huge Kneser ranks of random graphs and minimum difference representations}}
%%% \title{{\bf \huge Difference representations of graphs}}
%...... check that no other paper shares the name
%...... think of a name

\pagestyle{myheadings} \markright{{\small{\sc Z.~F\"uredi and I.~Kantor:    Kneser ranks and min-difference representations}}}  %%%% ???

\author{\bf Zolt\'an F\"uredi${}^{1}$%
\thanks{Research was supported in part by grant (no. K116769)  %%% 104343) 
 from the National Research, Development and Innovation Office – NKFIH,
by the Simons Foundation Collaboration Grant \#317487,
and by the European Research Council Advanced Investigators Grant 267195.}\,\,
    and  Ida Kantor${}^2$%
		\thanks{Supported by project 16-01602Y of the Czech Science Foundation (GACR).}}

\date{
{$^1$} Alfr\'ed R\'enyi Institute of Mathematics, %%% 13--15 Re\'altanoda Street, 1053
Budapest, Hungary \\
(e-mail: \texttt{z-furedi@illinois.edu})\\
{$^2$} Charles University, Prague\\
(e-mail: \texttt{idasve@gmail.com})
}

\footnotetext{% \hskip -.6 cm
  \emph{Keywords and Phrases}: random graphs, Kneser graphs, clique covers, intersection graphs.\\
 \emph{2010 Mathematics Subject Classification}:
  05C62, 05C80.\hfill    {\tt  [{\jobname}.tex]}\newline
 Submitted to  \emph{???}   \hfill
 Printed on \today }

\maketitle

\renewcommand{\thefootnote}{\empty}

%\vskip -1cm

\begin{abstract}
Every graph $G=(V,E)$ is an induced subgraph of some Kneser graph of rank $k$, i.e., there is an assignment of (distinct)  $k$-sets $v \mapsto A_v$ to the vertices $v\in V$ such that
 $A_u$ and $A_v$ are disjoint if and only if $uv\in E$.
The smallest such $k$ is called the {\em Kneser rank} of $G$ and denoted by $\fKn(G)$.
As an application of a result of Frieze and Reed concerning the clique cover number of random graphs we show that
 for constant $0< p< 1$ there exist constants $c_i=c_i(p)>0$, $i=1,2$ such that with high probability
\[   c_1 n/(\log n)< \fKn(G) < c_2 n/(\log n).
\]
We apply this for other graph representations defined by Boros, Gurvich and Meshulam.

A {\em $k$-min-difference representation} of a graph $G$ is an assignment of a set $A_i$ to each vertex $i\in V(G)$ such that
\[ ij\in E(G) \,\, \Leftrightarrow \, \, \min \{|A_i\setminus A_j|,|A_j\setminus A_i| \}\geq k.
\]  
The smallest $k$ such that there exists a $k$-min-difference representation of $G$ is denoted by $f_{\min}(G)$. 
Balogh and Prince proved in 2009 that for every $k$ there is a graph $G$ with $f_{\min}(G)\geq k$. 
We prove that there are constants $c''_1, c''_2>0$ such that  $c''_1 n/(\log n)< f_{\min}(G) < c''_2n/(\log n)$ holds for almost all  bipartite graphs $G$ on $n+n$ vertices. 

\end{abstract}

%%% \end{document}

%%%%%%%%%%%%%%%%%%%%%%%%%%%%%%%%%%%%%%%%%%%%%%%%%%%%%%%%%%%%%%%%%%%%%%%%%%%%%%%%%%%%
\section{Kneser representations}

A representation of a graph $G$ is an assignment of mathematical objects of a given kind (intervals, disks in the plane, finite sets, vectors, etc.) to the vertices of $G$ in such a way that two vertices are adjacent if and only if the corresponding sets satisfy a certain condition (intervals intersect, vectors have different entries in each coordinate, etc.). Representations of various kinds have been studied extensively,  see, e.g., \cite{EatonRodl},  \cite{fu90}, the monograph~\cite{McKeeMcMorris}, or from information theory point of view~\cite{KornerMonti01}.
The representations considered in this paper are assignments $v \mapsto A_v$ to the vertices $v\in V$ of a graph $G=(V,E)$ such that the $A_v$'s are (finite) sets satisfying certain relations.

The {\em Kneser graph} ${\rm Kn}(s,k)$ (for positive integers $s\geq 2k$) is a graph whose vertices are all the $k$-subsets of the set $[s]:=\{1,2, \dots, s\}$, and whose edges connect two sets if they are disjoint.
An assignment $(A_1, \dots , A_n)$ for a graph $G=(V,E)$ (where $V=[n]$) is called a {\em Kneser representation of rank} $k$ if
 each $A_i$ has size $k$, the sets are distinct, and  $A_u$ and $A_v$ are disjoint if and only if $uv\in E$.

Every graph on $n$ vertices with minimum degree $\delta< n-2$ has a Kneser representation of rank $(n-1-\delta)$.
To see that, define the {\em co-star} representation $(A_1', \dots, A_n')$ of $G$.
For every $i\in V(G)$, let $A'_i$ be the set of the edges adjacent to $i$ in the complement of $G$ (this is the graph $\overline{G}$ with $V(\overline{G})=V(G)$ and $E(\overline{G})=\binom{V(G)}{2}\setminus E(G)$).  
We have $A'_i\cap A'_j =1$ if $ij\not \in E(G)$, otherwise $A'_i\cap A'_j =0$, and the maximum size of $A'_i$ is $n-1-\delta(G)$. 
To turn the co-star representation into a Kneser representation add pairwise disjoint sets of labels to the sets $A'_1,\dots,A'_n$ to increase their cardinality to exactly $n-1-\delta(G)$. 
The resulting sets $A_1,\dots,A_n$ are all distinct, they have the same intersection properties as $A'_1,\dots,A'_n$, and form a Kneser representation of $G$ of rank $n-1-\delta(G)$.

Let $\Gn$ denote the set of $2^{\binom{n}{2}}$ (labelled) graphs on $[n]$ and let $\GnkKn$ denote the family of graphs on $[n]$ having a Kneser representation of rank $k$.
$G\in \GnkKn$ is equivalent to the fact that $G$ is an {\em induced} subgraph of some Kneser graph ${\rm Kn}(s,k)$.
We have
%%% \begin{equation}\label{eq:1}
\[   {\mathcal G}(n, 1, {\Kne})\subseteq  {\mathcal G}(n, 2, {\Kne}) \subseteq \dots \subseteq {\mathcal G}(n, n-1, {\Kne})= \Gn.
\]
%%%  \end{equation}
Let $\fKn(G)$ denote the smallest $k$ such that $G$ has a Kneser representation of rank $k$. We have seen that $ \fKn(G)\leq n-\delta$.
We show that there are better bounds for almost all graphs.

\begin{theorem}\label{th:1}
There exist constants $c_2 > c_1>0$ such that for $G\in \Gn$ with high probability
%%%  \begin{equation}\label{eq:2}
\[   c_{1} \frac{n}{\log n}< \fKn (G) < c_{2}\frac{n}{\log n}.
\]
%%%  \end{equation}
  \end{theorem}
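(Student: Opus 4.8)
The starting point is a dictionary between Kneser representations of $G$ and edge clique covers of the complement $\overline G$, which for $G\in\Gn$ is again (marginally) a uniform random graph on $[n]$. If $(A_v)_{v\in V}$ is a Kneser representation of $G$ of rank $k$, set $C_x:=\{v:x\in A_v\}$ for each ground element $x$; since $A_u\cap A_v\neq\emptyset$ exactly when $uv\notin E(G)$, i.e.\ when $uv\in E(\overline G)$, each $C_x$ is a clique of $\overline G$, these cliques cover every edge of $\overline G$, and each vertex lies in at most $k$ of them. Conversely, given a collection $\cC$ of cliques of $\overline G$ that covers $E(\overline G)$ and in which every vertex lies in at most $k$ cliques, let $A_v$ be the set of indices of the cliques of $\cC$ through $v$, augmented by enough fresh, vertex-private elements so that $|A_v|=k+1$; then $(A_v)$ is a Kneser representation of $G$ of rank $k+1$, the private elements guaranteeing distinctness and creating no intersections. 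Hence $M\le\fKn(G)\le M+1$, where $M:=\min_{\cC}\max_{v\in V}|\{C\in\cC:v\in C\}|$ with the minimum over all edge clique covers $\cC$ of $\overline G$, and it remains to prove that $M=\Theta(n/\log n)$ with high probability.

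For the lower bound, fix an arbitrary edge clique cover $\cC$ of $\overline G$ and a vertex $v$, and let $d$ be the number of cliques of $\cC$ through $v$. Each such clique meets $N_{\overline G}(v)$ in a clique of $\overline G$, hence in at most $\omega(\overline G)-1$ vertices, and together these $d$ cliques cover all of $N_{\overline G}(v)$, because every edge at $v$ is covered; therefore $d\ge\deg_{\overline G}(v)/(\omega(\overline G)-1)$. With high probability, and simultaneously, $\omega(\overline G)\le(2+o(1))\log_2 n$ (first moment) and $\deg_{\overline G}(u)\ge(\tfrac{1}{2}-o(1))n$ for every vertex $u$ (Chernoff plus a union bound); since these bounds are independent of $\cC$, it follows that with high probability every edge clique cover of $\overline G$ has maximum local degree at least $(1-o(1))\,n/(4\log_2 n)$, so $M\ge c_1 n/\log n$ for any sufficiently small positive constant $c_1$.

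For the upper bound I would invoke the theorem of Frieze and Reed: with high probability $\overline G$ admits an edge clique cover by $O(n^2/(\log n)^2)$ cliques, each necessarily of size $O(\log n)$. Counting vertex--clique incidences shows that such a cover already has \emph{average} local degree $O(n/\log n)$; what is needed is a cover whose \emph{maximum} local degree is $O(n/\log n)$, which is then substituted into the dictionary above. The natural route is to carry the balance through the Frieze--Reed construction itself: it builds cliques one at a time by sampling a uniformly random permutation of the vertices and taking the greedy clique it determines (of size $(1+o(1))\log_2 n$), iterating $O(n^2/(\log n)^2)$ times and covering the $o(n^2)$ leftover edges one at a time. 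By vertex-symmetry a fixed vertex lies in a given greedy clique with probability $\Theta((\log n)/n)$, so over the $O(n^2/(\log n)^2)$ rounds a Chernoff estimate plus a union bound over the $n$ vertices show that with high probability every vertex lies in $O(n/\log n)$ of the greedy cliques; and one verifies that with high probability the leftover uncovered graph has maximum degree $O(n/\log n)$, so covering it edge by edge adds only $O(n/\log n)$ to each local degree. This yields $M\le c_2 n/\log n$, hence $\fKn(G)\le c_2 n/\log n$.

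The one genuinely delicate point is this balancing: Frieze--Reed, as usually quoted, bounds only the \emph{total} number of cliques, whereas here one must bound the \emph{maximum} number through any single vertex, which requires either re-examining their greedy construction to prove round-by-round concentration (for instance via a bounded-differences martingale, also controlling the leftover degrees) or inserting an explicit rebalancing step. Everything else --- the Kneser-rank/clique-cover dictionary, the neighbourhood lower bound, the private-element padding that fixes cardinalities and forces distinctness, and the standard concentration facts for dense random graphs --- is routine.
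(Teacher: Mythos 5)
Your dictionary between rank-$k$ Kneser representations of $G$ and edge clique covers of $\overline G$ of thickness at most $k$ is exactly the paper's inequality $\theta_0(\overline G)\le \fKn(G)\le\theta_0(\overline G)+1$, and your lower bound (every cover must use at least $\deg_{\overline G}(v)/(\omega(\overline G)-1)$ cliques through $v$, combined with the standard degree and clique-number estimates for a dense random graph) is the paper's lower-bound argument verbatim. So the skeleton is right. The problem is the upper bound, where you have correctly located the difficulty but not resolved it: you need a clique cover of $\overline G$ whose \emph{maximum} local degree is $O(n/\log n)$, while Frieze--Reed only give the \emph{total} number of cliques. Your proposed fix --- rerun their greedy construction and prove round-by-round concentration of the local degrees, plus control of the leftover edges --- is precisely the step the authors decline to carry out; they remark that their algorithm ``probably'' yields the thickness bound but that one would have to show, among other things, that the $O(n^{31/16})$ edges removed as singletons are well distributed over the vertices. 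The rounds of the construction are adaptive (each clique is chosen in the graph of still-uncovered edges), so the per-round inclusion probabilities are not i.i.d.\ and a bare Chernoff-plus-union-bound does not apply; as written this is a genuine gap, not a routine verification.

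The paper closes this gap by a different device that you may find worth knowing: it first fixes a linear space (essentially an affine plane $AG(2,q)$, $q\approx\sqrt n$) partitioning the edge set of $K_n$ into $m=n+o(n)$ lines of size $(1+o(1))\sqrt n$, each vertex lying on $(1+o(1))\sqrt n$ lines. Restricting the random graph to each line gives independent random graphs on $\approx\sqrt n$ vertices; each is covered using only the \emph{expectation} bound $E(\theta_1)=O(\ell^2/(\log\ell)^2)$ from Frieze--Reed, which by vertex symmetry within a line gives $E(X_i(v))=O(\sqrt n/\log n)$ for the local degree contributed by line $L_i$. Since the contributions from the $\approx\sqrt n$ lines through $v$ are genuinely independent, Chernoff's inequality applies legitimately and a union bound over vertices finishes the proof. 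This sidesteps any re-analysis of the Frieze--Reed algorithm and any rebalancing step. If you want to complete your write-up, you should either adopt such a decomposition or actually supply the martingale/concentration analysis of the Frieze--Reed procedure, including the distribution of the leftover edges; at present that paragraph is an announcement of what must be proved rather than a proof.
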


We will prove a stronger version as Corollary~\ref{co:1}.

\section{Minimum difference representations}

In {\em difference representations}, generally speaking, vertices are adjacent if the representing sets are sufficiently different. As an example consider Kneser graphs, where the vertices are adjacent if and only if the representing sets are disjoint.
There are other type of representations where one joins sets close to each other, e.g.,   $t$-{\em intersection representations} were investigated by {M.~Chung and West}~\cite{MChungWest} for dense graphs and {Eaton and R{\"o}dl}~\cite{EatonRodl} for sparse graphs. But these are usually lead to different type of problems, one cannot simply consider the complement of the graph.

%More precisely, we will consider the following setup: let $\tau$ be a symmetric function, $\tau:\mathbb{R}^2\rightarrow \mathbb{R}$, and let $k$ be a positive integer.

This paper is mostly focused on $k$-min-difference representations (and its relatives), defined by Boros, Gurvich and Meshulam in~\cite{BGM04} as follows.

\begin{definition}
Let %%%    $n\in \mathbb{N}$ and let
 $G$ be a graph on the vertices $[n]=\{ 1,\dots,n\}$.
A {\em $k$-min-difference representation} $(A_1,\dots,A_n)$
%%%    $\rho$
% in my notes it says that I should put $\rho=(A_1,\dots,A_n)$ here, but we are defining rho, not some equation.
of $G$ is an assignment of a set $A_i$ to each vertex $i\in V(G)$ so that
\[ ij\in E(G) \,\, \Leftrightarrow \,\, \min \{|A_i\setminus A_j|,|A_j\setminus A_i| \}\geq k.
\]
%%%    When convenient, we view the representation $\rho$ as the ordered $n$-tuple $\rho=(A_1,\dots,A_n)$.
%** Is this piece of notation really needed? -YES
Let $\cG(n,k,\min)$ be the set of graphs with $V(G)=[n]$ that have a $k$-min-difference representation.
The smallest $k$ such that %there exists a $k$-min-difference representation of $G$
$G\in \cG(n,k,\min)$
is denoted by $\rmin(G)$.
\end{definition}
The co-star representation (which was investigated by {Erd{\H{o}}s,  Goodman, and P{\'o}sa}~\cite{EGP66} in their classical work on clique decompositions)
  shows that $\rmin(G)$ exists and it is at most  $n-1-\delta(G)$.

%%%    In an unpublished report,
Boros, Collado, Gurvits, and Kelmans~\cite{BCGK00} showed that many $n$-vertex graphs, including all trees, cycles, and line graphs, the complements of the above, and $P_4$-free graphs, belong to $\cG(n,2,\min)$.
% .....HOW TO CITE THEIR WORK IF THE REPORT DOES NOT EXIST?
They did not find any graph with $\rmin(G)\geq 3$.
Boros, Gurvitch and Meshulam~\cite{BGM04} asked whether the value of $\rmin$ over all graphs is bounded by a constant. This question was answered in the negative by
Balogh and Prince~\cite{BP09}, who proved that for every $k$  there is an $n_0$ such that whenever $n>n_0$, then for a graph $G$ on $n$ vertices we have $\rmin(G)\geq k$ with high probability.
%%% However, in
Their proof used a highly non-trivial Ramsey-type result due to {Balogh and Bollob{\'a}s}~\cite{baloboll:unav05}, so their bound on $n_0$ is a tower function of $k$.

Our main result is a significant improvement of the Balogh-Prince result.
Let $\Gnn$ denote the family of $2^{n^2}$ bipartite graphs $G$ with partite sets $V_1$ and $V_2$, $|V_1|=|V_2|=n$.
\begin{theorem}\label{main}
There is a constant $c>0$ such that
for almost all bipartite graphs $G\in \Gnn$ one has $\rmin(G)\geq cn/{(\log n)}$.
\end{theorem}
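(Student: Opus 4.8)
The plan is to pass to the counting form of the statement: for a sufficiently small $c>0$ and $k=\lfloor cn/(\log n)\rfloor$, I would show that $|\{G\in\Gnn:\rmin(G)\le k\}|=o(2^{n^2})$, which is exactly the assertion since $|\Gnn|=2^{n^2}$. The goal is to prove that possessing a $k$-min-difference representation forces so much structure on $G$ that $G$ can be encoded with $o(n^2)$ bits.

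First I would analyse a $k$-min-difference representation $(A_z)_{z\in V_1\cup V_2}$ of $G$. List the vertices as $z_1,\dots,z_{2n}$ with $|A_{z_1}|\le\cdots\le|A_{z_{2n}}|$. For $i<j$ one has $z_iz_j\in E(G)\iff |A_{z_i}\setminus A_{z_j}|\ge k$, so every edge has both endpoints of set–size $\ge k$; discarding isolated vertices (almost no $G\in\Gnn$ has one) we may assume $|A_z|\ge k$ for all $z$. Since $V_1$ and $V_2$ are independent in $G$, the sets inside one part form a \emph{$(k-1)$-near-chain}: $|A_u\setminus A_v|\le k-1$ whenever $u,v$ lie in the same part and $|A_u|\le|A_v|$. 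Elementary set inequalities then show that the crossing deficiency $\delta(u,v):=|A_u\setminus A_v|$ ($u\in V_1$, $v\in V_2$, each part ordered by size) is ``monotone with slack'' — it may move by at most $k-1$ per step in the wrong direction in each coordinate — and $z_iz_j\in E$ is recovered by comparing $\delta$ with $k$ on the half $|A_{z_i}|\le|A_{z_j}|$ (the other half being symmetric). Finally, by taking the representation minimal with respect to its ground set $X$, every element of $X$ must lie in the deficiency set $A_u\setminus A_v$ of some edge $uv$ with $|A_u\setminus A_v|=k$; this bounds the number of distinct \emph{types} $\{z:x\in A_z\}$ and the multiplicity of each type, hence $|X|$ and every $|A_z|$, by a polynomial in $n$.

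The crux is to compress this structure. What I would try to prove is a normalization statement of the following strength: if $\rmin(G)\le k$ then the crossing graph of $G$ — all that matters — can be produced from a representation of low complexity, e.g.\ from a union of $O(k)$ bipartite ``staircase'' (difference) graphs, or, more in the spirit of the Kneser part of this paper, from a Kneser representation of rank $O(k)$ of an auxiliary graph on $O(n)$ vertices plus $O(n)$ bits of side information. Either suffices, by a counting bound of Frieze–Reed strength: there are only $2^{O(n\log n)}$ labelled staircase graphs on $n+n$ vertices (respectively one feeds Corollary~\ref{co:1} to count the Kneser representations of rank $O(k)$), so the number of $G$ so obtained is at most $2^{O(kn\log n)}=2^{O(cn^2)}=o(2^{n^2})$ once $c$ is small enough; one also disposes of small $k$ separately, running the count by hand there.

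I expect the normalization to be essentially the whole difficulty. The slack $k-1$ accumulates along a chain of length $n$, producing a genuine error term of size up to $\Theta(kn)$, and a naive reading of the monotone-with-slack deficiency function decomposes the crossing graph only into $\Theta(n/k)$ — or more — simple pieces, which is far too many. One must show that in a suitably minimal representation this slack cannot be spent wastefully — equivalently, that a random bipartite graph cannot be assembled from a $k$-bounded representation any more cheaply than the counting permits. This is the min-difference analogue of the clique-cover lower bound of Frieze and Reed driving Theorem~\ref{th:1}, and it is the bipartite hypothesis, which pins down the rigid near-chain structure inside each part, that makes the reduction go through.
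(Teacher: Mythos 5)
Your overall strategy --- reduce to showing $|\{G\in\Gnn:\rmin(G)\le k\}|=e^{O(kn\log n)}=o(2^{n^2})$ for $k=cn/\log n$, via reduced representations and the observation that $|A_u\setminus A_v|\le k-1$ whenever $u,v$ lie in the same part and $|A_u|\le |A_v|$ --- is exactly the paper's, and the within-part near-chain structure is the right starting point. But the proposal stops precisely where the proof begins; two concrete steps are missing. First, you bound the ground set $X$ of a reduced representation only ``by a polynomial in $n$''. The general reduced bound is $|X|\le 2e(G)k\le kn^2$ (Lemma~\ref{l:noelts}), and that is useless here: the natural enumeration, which records for each element of $X$ where it enters each of the two chains, costs about $(n+1)^{|X|}$, and with $|X|=\Theta(kn^2)$ this already dwarfs $2^{n^2}$. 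The paper's Lemma~\ref{l:noeltsBIP} proves the much stronger bound $|X|\le 4kn$ for reduced representations of bipartite graphs, and its proof is the genuinely nontrivial part of the argument: after removing $A'=\bigcup_i(A_i\setminus A_{i+1})$ and $B'$ (each of size at most $kn$ by the near-chain property), the remaining elements split into $(n+1)^2$ atoms, reducedness forces every nonempty atom to lie in a ``critical'' difference $A_m\setminus B_\ell$ or $B_p\setminus A_q$ of size exactly $k$, and a covering argument (two critical rectangles per row) gives $|X\setminus(A'\cup B')|\le 2(n+1)k$. Nothing in your proposal substitutes for this.

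Second, your compression step is only a wish: you say you would ``try to prove'' a normalization to $O(k)$ staircase graphs or to a rank-$O(k)$ Kneser representation, and you yourself flag this as ``essentially the whole difficulty.'' The paper proves no such normalization; instead, once $|X|\le 4kn$ is available, it counts the representations directly (Claim~\ref{cl:M}): the sets $A_{i+1}\setminus A_i$ of size at most $k-1$ are chosen in fewer than $(4en)^{kn}$ ways, the top $1$ of each of the $4kn$ columns in at most $(n+1)^{4kn}$ ways, and the remaining zero--one configurations (at most $kn$ of them, injected into the already-placed one--zero configurations) in at most $n^{kn}$ ways, for a total of $e^{O(kn\log n)}$; multiplying by $(n!)^2$ for the two sorting permutations finishes the count. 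So the accumulating slack you worry about is not ``prevented from being spent wastefully''; it is simply paid for, because the total slack $\sum_i|A_i\setminus A_{i+1}|\le kn$ is small enough that every way of spending it costs only $e^{O(kn\log n)}$. As it stands the proposal is a correct plan whose two load-bearing lemmas are absent.
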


Let $H$ be a graph on $\log n$ vertices with $\rmin(H)\geq c\log n/{(\log \log n)}$.
One of the basic facts about random graphs is that almost all graphs on $n$ vertices contain $H$ as an induced subgraph. The following theorem is an easy consequence of this fact together with Theorem~\ref{main}.
% ..... what's the right number? Is it (ln n)/(ln 2)? Or something else? In this case the constant is maybe interesting. - Actually, probably not interesting, since we have an unspecified c in theorem 1, anyway.

\begin{corollary}\label{co:probversion}
There is a constant $c>0$ %%%   and an $n_0\in\mathbb{N}$
such that   %%%  whenever $n>n_0$,
almost all graphs $G$ on $n$ vertices satisfy
\[\rmin(G) \geq \frac{c\log n}{\log \log n}. \]
\end{corollary}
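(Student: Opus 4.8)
The plan is to derive the corollary from Theorem~\ref{main} by pairing it with two standard facts about random graphs: that $\rmin$ never increases when one passes to an induced subgraph, and that a uniformly random graph on $n$ vertices almost surely contains, as an induced subgraph, every prescribed graph on at most $(1-o(1))\log_2 n$ vertices.

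The monotonicity is immediate: if $H$ is an induced subgraph of $G$ and $(A_v)_{v\in V(G)}$ is a $k$-min-difference representation of $G$, then the restriction $(A_v)_{v\in V(H)}$ is a $k$-min-difference representation of $H$, since for $u,v\in V(H)$ one has $uv\in E(H)\Leftrightarrow uv\in E(G)$; hence $\rmin(H)\le\rmin(G)$. For the gadget, fix a small $\delta>0$ and set $m=m(n):=\lfloor\tfrac{1-\delta}{2}\log_2 n\rfloor$, so that $m\to\infty$ and $2m\le(1-\delta)\log_2 n$. By Theorem~\ref{main}, for all large $n$ there is a bipartite graph $H=H_n$ with both partite sets of size $m$ and $\rmin(H_n)\ge c\,m/\log m$; since $m=\Theta(\log n)$, this reads $\rmin(H_n)\ge c'\,\log n/\log\log n$ for some constant $c'>0$, while $|V(H_n)|=2m\le(1-\delta)\log_2 n$.

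By the induced-universality fact, almost every $G\in\Gn$ contains $H_n$ as an induced subgraph, and then $\rmin(G)\ge\rmin(H_n)\ge c'\,\log n/\log\log n$ with high probability, which is the corollary. The one point needing care is this last step, because $H_n$ has order $\Theta(\log n)$, not constant: the naive argument — split $[n]$ into $\Theta(n/\log n)$ pairwise disjoint blocks of size $2m$ and hope one induces $H_n$ — only works for $2m=O(\sqrt{\log n})$ and would give the far weaker bound $\Omega(\sqrt{\log n}/\log\log n)$. To handle $|V(H_n)|$ of order $(1-\delta)\log_2 n$ one embeds $H_n$ greedily: fix an ordering $v_1,\dots,v_h$ of $V(H_n)$ with $h=2m$, partition $[n]$ into parts $L_1,\dots,L_h$ of size $\sim n/h$, and place $v_t$ into $L_t$; when $v_1,\dots,v_i$ have been placed, the edges from $L_{i+1}$ to the $i$ already chosen vertices are still unexposed, so the number of $w\in L_{i+1}$ with the correct adjacency pattern to them has mean $\sim(n/h)2^{-i}\ge n^{\delta}/h\to\infty$, and a Chernoff bound at each step together with a union bound over the at most $\log n$ steps yields an induced copy of $H_n$ almost surely. (A second-moment computation for the number of induced copies of $H_n$ does the same job in this range.) This argument works precisely while $|V(H_n)|<(1-o(1))\log_2 n$, which is exactly why $m$ was chosen as above — and that bookkeeping is the only real obstacle.
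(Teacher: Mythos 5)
Your proposal is correct and follows exactly the paper's intended route: apply Theorem~\ref{main} to get a bipartite graph $H_n$ on $\Theta(\log n)$ vertices with $\rmin(H_n)=\Omega(\log n/\log\log n)$, use monotonicity of $\rmin$ under induced subgraphs, and invoke the fact that almost every $G\in\Gn$ contains $H_n$ as an induced subgraph. The only difference is that the paper simply cites this containment as a ``basic fact'' about random graphs, whereas you correctly flag that the disjoint-blocks argument is insufficient at order $\log n$ and supply a valid greedy vertex-exposure embedding in its place.
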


%%%    such that for $G\in \Gnp$ with high probability one has

%%%%%%%%%%%%%%%%%%%%%%%%%%%%%%%%%%%%%%%%%%%%%%%%%%%%%%%
\section{On the number of graphs with $k$--min-dif representations}

\subsection{The structure of min-dif representations of bipartite graphs}

Analogously to previous notation,
   $\Gnkmin$ (and  $\Gnnkmin$) denotes the family of (bipartite) graphs $G$ with $n$ labelled vertices $V$ (partite sets $V_1$ and $V_2$, $|V_1|=|V_2|=n$, respectively)
with $\rmin(G)\leq k$.
Our aim in this Section is to show that there exists a constant $c> 0$ such that $|\Gnnkmin|= o(2^{n^2})$ if $k < c n /(\log n)$.
This implies  that for almost all bipartite graphs on $n+n$ vertices $\rmin(G) \geq c n/ (\log n)$.

A $k$-min-difference representation $( A_i: i\in V)$ of $G$ is {\em reduced} if deleting any element $x$ from all sets that contain it yields a representation  of a graph different from $G$.
Note that
\[     |A_i\setminus A_j|-1\leq   |(A_i\setminus x)\setminus (A_j\setminus x)| \leq |A_i\setminus A_j|
\]
so the graph $G'$ corresponding to the $k$-representation $( A_i\setminus x: i\in V)$ has no more edges than $G$,  $E(G')\subseteq E(G)$.
There is a natural partition of the elements of $\bigcup A_i$: for every $\emptyset \neq I\subseteq [n]$, we have the subset ($\bigcap_{i\in I}A_i) \cap (\bigcap_{j\not\in I}\overline{A_j})$ where $\overline{A_j}$ is the complement of the set $A_j$. We call these subsets {\em atoms}. If a $k$-min-difference representation is reduced, then no atom has more than $k$ elements. It follows that the ground set $\bigcup A_i$ of a reduced representation of an $n$-vertex graph has no more than $k 2^n$ elements. Lemma~\ref{l:noelts} improves on this observation.

\begin{lemma}\label{l:noelts}
Let $G$ be a graph with $n$ vertices and $(A_1,\dots, A_n)$ a reduced $k$-min-difference representation of $G$. Then
\[\left\lvert\bigcup A_i \right\rvert \leq 2e(G)k \leq k n^2.
\]
\end{lemma}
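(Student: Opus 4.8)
The plan is to show that every element of $\bigcup A_i$ lies in a set of the form $A_a\setminus A_b$ with $ab\in E(G)$ and $|A_a\setminus A_b|=k$; call such a set \emph{tight}. Since there are at most $2e(G)$ ordered pairs $(a,b)$ with $ab\in E(G)$, and each tight set has size exactly $k$, a union bound over the tight sets yields $|\bigcup A_i|\le 2e(G)k$, and then $2e(G)\le n(n-1)\le n^2$ gives the second inequality.

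The first step is a monotonicity observation. Fix $x\in\bigcup A_i$ and delete $x$ from every set containing it. For a pair $a,b$, the quantity $|A_a\setminus A_b|$ is unchanged unless $x$ lies in exactly one of $A_a,A_b$; and if $x\in A_a\setminus A_b$ it decreases by exactly $1$ while $|A_b\setminus A_a|$ stays the same. Hence deleting $x$ can only decrease each $\min\{|A_a\setminus A_b|,|A_b\setminus A_a|\}$, so it can only destroy edges of $G$, never create them.

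The second step pins down how an edge gets destroyed. Since the representation is reduced, deleting $x$ changes $G$, so by the first step it destroys some edge $ab\in E(G)$: before deletion $\min\{|A_a\setminus A_b|,|A_b\setminus A_a|\}\ge k$, and afterwards it is $<k$. Both differences were $\ge k$ and at least one of them must have dropped, so $x$ lies in exactly one of $A_a,A_b$; relabelling, say $x\in A_a\setminus A_b$. Then $|A_b\setminus A_a|$ is untouched and still $\ge k$, so the quantity that fell below $k$ is $|A_a\setminus A_b|$, which decreased by exactly $1$; therefore $|A_a\setminus A_b|=k$ before deletion. Thus $x$ lies in the tight set $A_a\setminus A_b$, which is what was claimed, and the union bound above finishes the proof.

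I expect the only delicate point to be the case analysis in the second step — arguing that the destroyed edge disappears precisely because the difference on the side containing $x$ drops to $k-1$, rather than by some other mechanism. But since deleting $x$ leaves the opposite difference completely unchanged, no other mechanism is available, so the conclusion is forced; the rest is bookkeeping. (As a byproduct this reproves that no atom of a reduced representation has more than $k$ elements, since a nonempty atom $X_I$ with $i\in I$, $j\notin I$ sits inside any tight set $A_a\setminus A_b$ it forces.)
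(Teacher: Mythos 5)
Your proof is correct and follows essentially the same route as the paper: both cover $\bigcup A_i$ by the ``tight'' sets $A_a\setminus A_b$ with $ab\in E(G)$ and $|A_a\setminus A_b|=k$, of which there are at most $2e(G)$, each of size $k$. The only difference is that you spell out in detail why an element outside every tight set could be deleted without changing the graph, a step the paper leaves as a one-line appeal to reducedness.
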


\begin{proof}
Define the sets $A_{i,j}:= A_i\setminus A_j$ in the cases $ij \in E(G),$ and $|A_i\setminus A_j|=k$.
Let $S:= \bigcup A_{i,j}$. The number of elements in $S$ is bounded above by the quantity $|E(G)|\cdot 2k$.
We claim that $S=\bigcup A_i$. Otherwise, if there is an element  $x\in \left(\bigcup A_i \right)\setminus S$, then
  the representation can be reduced, $( A_i\setminus x: i\in V)$ defines the same graph as $( A_i: i\in V)$.
\end{proof}

The upper bound in Lemma~\ref{l:noelts} can be significantly improved for bipartite graphs.

\begin{figure}
\vspace{-11.6cm}
\begin{center}
\includegraphics[scale=.6]{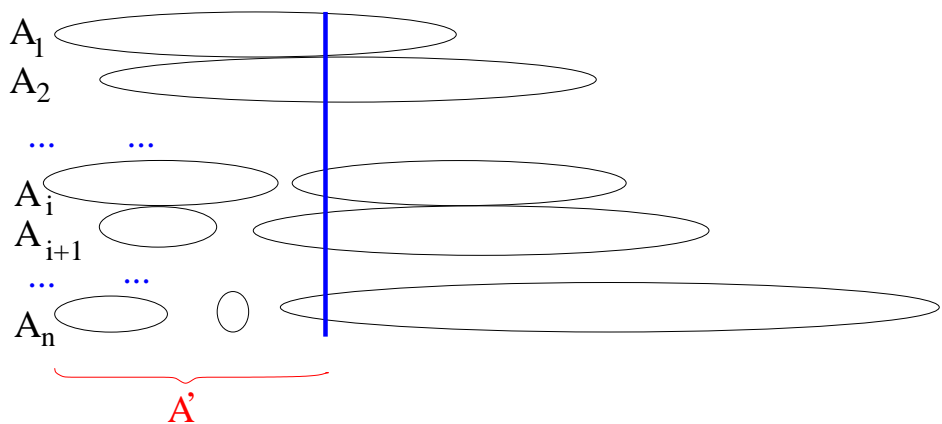}
\end{center}
\caption{$|A_1|\leq \dots \leq |A_n|$ in a min-dif representation when $\{ 1,2, \dots, n\}$ is independent}
\label{figAn}
\end{figure}

\begin{lemma}\label{l:noeltsBIP}
Let $G\in \Gnn$ be a bipartite graph with $n+n$ labeled vertices, $G\in \Gnnkmin$.  Let $(A_1,\dots,A_n)$ and $(B_1,\dots,B_n)$ be the sets representing the two parts.
If $(A_1,\dots, A_n, B_1, \dots , B_n)$ is a reduced $k$-min-difference representation of $G$, then
\[\left\lvert\left(\bigcup A_i\right) \cup \left(\bigcup B_i\right)   \right\rvert \leq  4kn.
\]
\end{lemma}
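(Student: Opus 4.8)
The plan is to refine the counting argument of Lemma~\ref{l:noelts} by using the bipartite structure to control which atoms can actually be large. In a reduced representation of the bipartite graph $G$, by the argument preceding Lemma~\ref{l:noelts} every atom has at most $k$ elements, and by the proof of Lemma~\ref{l:noelts} every element lies in some difference set $A_{i,j}=A_i\setminus A_j$ (or $B_i\setminus B_j$, or a mixed one) of size exactly $k$ coming from an edge. The key observation is that since $V_1=\{1,\dots,n\}$ is independent, for any two vertices $i,i'$ on the same side we have $\min\{|A_i\setminus A_{i'}|,|A_{i'}\setminus A_i|\}<k$; so if we order the sets on one side by size, $|A_1|\le\dots\le|A_n|$ (this is the situation depicted in Figure~\ref{figAn}), then $|A_i\setminus A_{i'}|<k$ whenever $i<i'$. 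The idea is that this forces the "private" part of each $A_i$ — the elements of $A_i$ not used to witness any edge from $i$ — to be small and to be essentially shared up the chain, so the total number of elements appearing only on one side is $O(kn)$, and then one adds the elements witnessing edges, which is again $O(kn)$ because each vertex needs only $k$ elements per neighbour-class, not per neighbour.

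First I would fix the side $V_1$ and sort so that $|A_1|\le\cdots\le|A_n|$. For each $i$, let $D_i\subseteq A_i$ be the set of elements $x\in A_i$ such that deleting $x$ from all sets containing it does not destroy any edge incident to $i$; reducedness says no element is useless for the whole graph, but an element can be "useless at $i$" while witnessing an edge elsewhere. The point is that an element of $A_i\setminus D_i$ is responsible, via some edge $ib$ with $b\in V_2$, for keeping $|A_i\setminus B_b|\ge k$ or $|B_b\setminus A_i|\ge k$; and among the $i$'s on the $V_1$-side that use a given element $x$ to push $|A_i\setminus B_b|$ up to $k$, the crucial claim is that there can be only boundedly many such $i$ for each $b$ — indeed if $x\in A_i\setminus B_b$ and $x\in A_{i'}\setminus B_b$ then $x$ is in neither difference between $A_i$ and $A_{i'}$, so it does not help separate them, consistent with independence, but more importantly the set of elements witnessing the edge $ib$ from the $A$-side is an atom-controlled set of size $\le k$. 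Counting: the number of (element, edge)-incidences on the $V_1$-side where the element is "essential for that edge" is at most $k$ times the number of edges touched, but each vertex $b\in V_2$ contributes at most... here is where the sorting enters, since along the chain the difference sets $A_i\setminus B_b$ are nested enough that only $O(k)$ new elements appear per $b$. Summing over $b\in V_2$ gives $O(kn)$ elements of $\bigcup A_i$ that witness edges; the remaining elements, those in $\bigcap$-type atoms shared among a final segment of the chain, number $O(kn)$ by the size-ordering (each new set $A_i$ adds at most $k$ "fresh" private elements before the ordering forces them to propagate). Symmetrically for $\bigcup B_i$, giving the bound $4kn$.

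The main obstacle I anticipate is making precise the claim that, after sorting by size on one side, each vertex of the other side "sees" only $O(k)$ new elements as one walks up the chain — this is the heart of why the bound is $O(kn)$ rather than the naive $O(kn^2)$ of Lemma~\ref{l:noelts}. The clean way to do this is probably to show: for $b\in V_2$ and $x\in B_b$, if $x\notin A_i$ for the largest $i$ with $ib\notin E(G)$ then $x$ need not be "protected" at $b$ by the $A$-side at all; and for $x\notin B_b$, the set of $i$ with $x\in A_i$ is (by the size ordering) an interval-like family whose contribution is absorbed. Equivalently, one assigns to each element a bounded number of "reasons to exist" — at most one pair $(i,i')$ on each side certifying a non-edge by a near-equality of difference sizes, plus at most one edge on each side — and shows reducedness forces at least one such reason, while the total number of reasons of each type is $O(kn)$. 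I would carry out the proof along these lines: (1) sort each side by size; (2) define for each element the edge(s)/non-edge(s) it is responsible for; (3) show via reducedness every element has a responsibility; (4) bound the number of elements per responsibility-type by $O(kn)$ using the size ordering; (5) add up to get $4kn$. Step (4) is the crux and the rest is bookkeeping.
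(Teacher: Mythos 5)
Your overall strategy matches the paper's in outline: sort each side by size, use the independence of each part to conclude $|A_i\setminus A_{i+1}|\le k-1$ for consecutive sets, collect these consecutive differences into a set of size at most $(n-1)k$ per side, and then argue that the remaining elements are controlled by difference sets of size exactly $k$ coming from reducedness. The first half of this is sound and is exactly what the paper does: it defines $A':=\bigcup_{i=1}^{n-1}(A_i\setminus A_{i+1})$, notes $|A'|\le (n-1)k$, and observes the crucial structural consequence that the sets $A_i\setminus A'$ form a chain.

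However, the step you yourself flag as ``the crux'' --- showing that the elements outside $A'\cup B'$ number only $O(kn)$ --- is not actually carried out, and your sketch of it (``each $b\in V_2$ sees only $O(k)$ new elements as one walks up the chain,'' ``a bounded number of reasons to exist'') is an assertion rather than an argument. The paper closes this gap as follows. Let $D=S\setminus(A'\cup B')$ and $\widetilde{A_i}=A_i\cap D$, $\widetilde{B_j}=B_j\cap D$. Because both restricted families are chains, $D$ decomposes into a grid of $(n+1)^2$ atoms $S_{i,j}=(\widetilde{A_i}\setminus\widetilde{A_{i-1}})\cap(\widetilde{B_j}\setminus\widetilde{B_{j-1}})$. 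Reducedness forces every nonempty atom $S_{i,j}$ to be contained in some ``critical'' set $A_m\setminus B_\ell$ or $B_p\setminus A_q$ of size exactly $k$ (otherwise its elements could be deleted without changing the graph). The chain structure then implies that within each of the $n+1$ rows of the grid, at most \emph{two} critical rectangles suffice to cover all nonempty atoms --- one of each orientation --- so $D$ is covered by at most $2(n+1)$ sets of size at most $k$, giving $|D|\le 2(n+1)k$ and hence $|S|\le 4kn$. This per-row two-rectangle covering argument is the missing idea; your proposal recognizes that something of this kind is needed but does not supply it, so as written the proof is incomplete precisely at the point that distinguishes the $O(kn)$ bipartite bound from the generic $O(kn^2)$ bound of Lemma~\ref{l:noelts}.
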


\begin{proof}
Suppose that $|A_1|\leq \dots \leq |A_n|$ and $|B_1|\leq \dots \leq |B_n|$. Let $A:= \bigcup A_i$ and $B:=\bigcup B_i$, $S:=A\cup B$.
Define
\begin{equation}\label{eq:A'}
A':= \bigcup_{i=1}^{n-1}(A_i\setminus A_{i+1}).
\end{equation}
For each $i$, the inequality $|A_i\setminus A_{i+1}|\leq |A_{i+1}\setminus A_i|$ follows from the assumption that $|A_i|\leq |A_{i+1}|$.
The vertices in each part of $G$ form an independent set, so for each $i$, we have $|A_i\setminus A_{i+1}|\leq k-1$.
Hence $|A'|\leq (n-1)k$.

If $x\in A_{\alpha}\setminus A_{\beta}$ for some $\alpha < \beta$, then there is an index $i$ such that $x\in A_i\setminus A_{i+1}$ and therefore $x\in A'$.
 In other words, if $x\in A_\alpha \setminus A'$ and $\alpha < \beta$ then $x\in A_\beta$.
Therefore the  sets $A_i\setminus A'$ form a chain (see Figure~\ref{figAn}),
%%% \begin{equation}\label{eq:chain}
\[  A_1\setminus A' \subseteq A_2\setminus A' \subseteq \dots \subseteq A_n\setminus A'.
\]
%%%  \end{equation}
Treat the other part of $G$ analogously: define $B'$ and note the same bound on its size, and note that the sets $B_i\setminus B'$ form a chain.

Let us define $D=S\setminus (A'\cup B')$.
We will prove that there are at most $2(n+1)$ sets of the form
  $A_m\setminus B_{\ell}$ and $B_{p}\setminus A_q$, each of cardinality $k$, covering $D$. Therefore $D$ contains at most $2(n+1)k$ elements.
For each $1\leq i\leq n$, let us define $\widetilde{A_i}=A_i\cap D$ and $\widetilde{B_i}=B_i\cap D$.
Let $\widetilde{A_0}=\widetilde{B_0}=\emptyset$ and $\widetilde{A_{n+1}}=\widetilde{B_{n+1}}=D$.
The sets $\widetilde{A_0}, \widetilde{A_1},\dots,\widetilde{A_n}, \widetilde{A_{n+1}}$ form a chain, same for $\widetilde{B_0},\widetilde{B_1}\dots,\widetilde{B_n},\widetilde{B_{n+1}}$. The elements of $D$ belong to $(n+1)^2$ atoms (as defined in the beginning of this section), many of them possibly empty, corresponding to the squares in Figure~\ref{fig1}.

\begin{figure}
\vspace{0.7cm}
\begin{center}
\includegraphics[scale=.9]{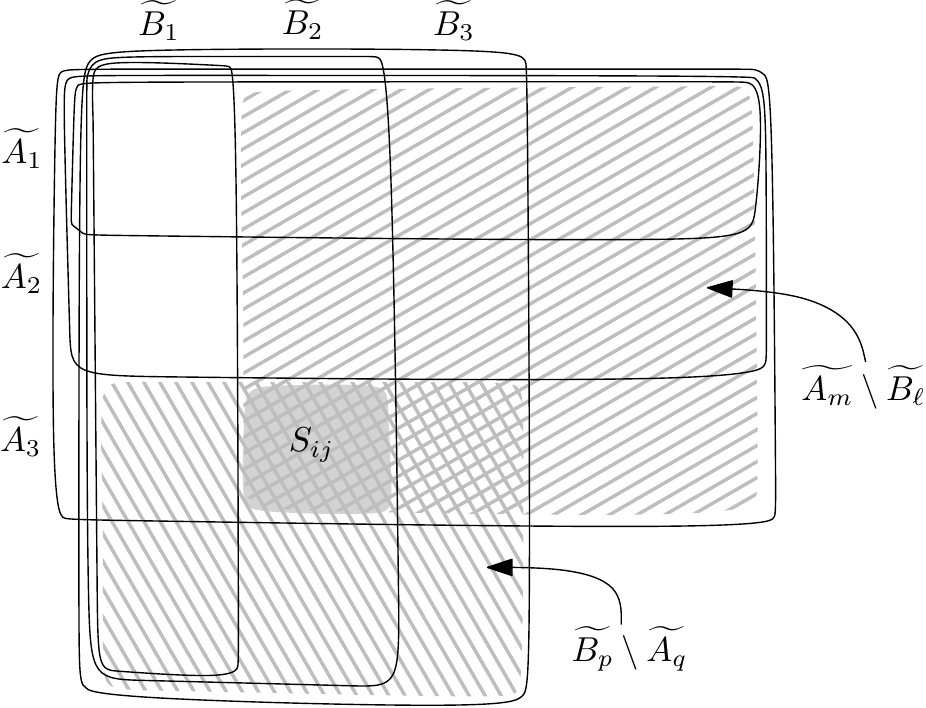}
\end{center}
\caption{The elements of $S\setminus (A'\cup B')$ split into $(n+1)^2$ atoms. }
\label{fig1}
\end{figure}

For each $i,j$, $1\leq i,j \leq n+1$, $(i,j)\neq (n+1,n+1)$, the atom $S_{i,j}$ is defined as $(\widetilde{A_i}\setminus \widetilde{A_{i-1}})\cap (\widetilde{B_j}\setminus \widetilde{B_{j-1}})$. 
Since the representation is reduced, no elements from the atom $S_{i,j}$ can be left out, so e.g., $S_{1,1}=\emptyset$. 
It follows that either there are some $m$ and $\ell$ such that $|A_m\setminus B_{\ell}|=k$ and the atom $S_{i,j}$ belongs in $A_m\setminus B_{\ell}$
(here $n\geq m\geq i\geq 1$ and $j> \ell \geq 1$),
or there are some $p,q$ such that $|B_{p}\setminus A_q|=k$ and the atom $S_{i,j}$ is in $B_{p}\setminus A_q$. Since $|\widetilde{A_m}\setminus \widetilde{B_{\ell}}|\subseteq A_m\setminus B_{\ell}$, we have $|\widetilde{A_m}\setminus \widetilde{B_{\ell}}|\leq k$ in the first case.  
Likewise in the second case, $|\widetilde{B_{p}}\setminus \widetilde{A_q}|\leq k$. In Figure~\ref{fig1}, the first option corresponds to a rectangle containing the $S_{i,j}$ cell and the upper-right corner, with all the squares in this rectangle together containing only at most $k$ elements. 
The second option corresponds to a similar rectangle with only at most $k$ elements in it, containing the $S_{i,j}$ square and the lower-left corner.

Call a subrectangle $\widetilde{A_m}\setminus \widetilde{B_{\ell}}$ {\em critical} if $|A_m\setminus B_{\ell}|=k$,
  and similarly $\widetilde{B_{p}}\setminus \widetilde{A_q}$ is critical if $|B_{p}\setminus A_q|=k$.
Our argument above can be reformulated that every (nonempty) cell $S_{i,j}$ is covered by a critical rectangle.
This implies that in each row one can find at most two critical rectangles that cover all non-empty atoms in it.
This yields the desired upper bound $|D|\leq 2(n+1)k$.

Finally, altogether $|S|\leq |A'|+|B'|+|D| \leq 4kn$.
\end{proof}

\subsection{Counting reduced matrices}

Let $S$ be a set of size $|S|=4kn$.
In this subsection we give an upper bound for the number of sequences $(A_1, \dots, A_n)$ of subsets of $S$
satisfying the following two properties
\\${}$\quad (P1) \quad    $|A_1|\leq \dots \leq |A_n|$,
\\${}$\quad (P2) \quad    $|A_i\setminus A_{i+1}|\leq k-1$  \enskip(for all $1\leq i \leq n-1$).

Let $\cM$ be the $0$-$1$ matrix that has the characteristic vectors of the sets $A_1,\dots, A_n$ as its rows (in this order).
The positions in $\cM$ where an entry 1 is directly above an entry 0 will be called {\em one-zero configurations}, while the positions where a 0 is directly above a 1 will be called {\em zero-one configurations}.
A column in a 0-1 matrix is uniquely determined by the locations of the one-zero configurations and the zero-one configurations unless it is a full 0 or full 1 column.
We count the number of possible matrices $\cM$ by filling up the $n\times (4kn)$ entries in three steps.

Each one-zero configuration corresponds to an $i< n$ and to an element $x\in A_i\setminus A_{i+1}$.
A set $A_{i+1}\setminus A_i$ can be selected in at most
\[
\binom{4nk}{0}+ \dots + \binom{4nk}{k-1}< (4en)^k
\]
ways ($n> k\geq 1$).
Do this for each $i<n$, altogether we have less than $(4en)^{kn}$ ways to write in the  one-zero configurations into $\cM$.

Select in each column the top $1$.
If there is no such element in a column we indicate that it is blank, a full zero column.
There are at most $n+1$ outcomes for each column, altogether there are at most $(n+1)^{4kn}$ possibilities.
Fill up with $0$'s each column above its top $1$.
Define  $A'\subset S$ as in \eqref{eq:A'}, $A':= \bigcup_{i=1}^{n-1}(A_i\setminus A_{i+1})$.
We have $|A'|\leq kn$.
 The columns of $\cM$ that correspond to the elements of $S\setminus A'$ have a (possibly empty) string of zeros followed by a string of ones.
We almost filled up $\cM$ and we can finish this process by selecting the remaining zero-one configurations.

There may be several zero-one configurations in a single column.
Each of them has a unique (closest, or smallest indexed) $1$ above them.
That element $1$ is already written in into our still partially filled $\cM$, because that element 1 (even if it is the top $1$ element) belongs to a unique one-zero configuration.
This correspondence is an injection.
So there are at most $\sum_i |A_i\setminus A_{i+1}| \leq kn$ zero-one configurations in the columns corresponding to $A'$ which are not yet identified.
There are at most $n^{kn}$ ways to select them.

Since (for $n> k\geq 1$)
\begin{equation*}\label{eq:M}
(4en)^{kn} \times (n+1)^{4kn}\times n^{kn} < n^{6kn+ O(kn/\log n)}= e^{6kn\log n + O(kn)},
\end{equation*}
  we obtain the following

\begin{cl}\label{cl:M}
Altogether, there are $e^{O(kn \log n)}$ ways  to fill $\cM$ with entries in $\{0,1\}$ according to the rules {\rm (P1)} and {\rm (P2)}.
   \end{cl}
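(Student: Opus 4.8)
The plan is to bound the number of admissible matrices $\cM$ by showing that each such matrix can be reconstructed from a small amount of recorded data, and then to count the possible data. The reconstruction would go column by column, using the elementary fact that a $0$-$1$ column is determined once one knows the set of rows where it has a one-zero configuration, the set of rows where it has a zero-one configuration, and whether the column is constant. I would record the necessary information in three stages.

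First, for each $i$ with $1\le i\le n-1$ I would record the set of columns in which row $i$ carries a $1$ directly above a $0$ in row $i+1$; this set is exactly $A_i\setminus A_{i+1}$, of size at most $k-1$ by (P2), so it can be chosen in at most $\binom{4nk}{0}+\dots+\binom{4nk}{k-1}<(4en)^k$ ways, hence in at most $(4en)^{kn}$ ways over all $n-1$ transitions. This pins down every one-zero configuration of $\cM$, and in particular the set $A'=\bigcup_{i=1}^{n-1}(A_i\setminus A_{i+1})$ of \eqref{eq:A'}, which satisfies $|A'|\le kn$. Second, for each of the $4kn$ columns I would record the index of its topmost $1$ (with a separate symbol for an all-zero column), which is at most $(n+1)^{4kn}$ possibilities and fixes the block of $0$'s above the top $1$ in every column; note that a column indexed by an element of $S\setminus A'$ has no one-zero configuration, so it reads $0\cdots0\,1\cdots1$ and is now completely determined. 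Third -- the delicate step -- only the zero-one configurations lying in the at most $kn$ columns of $A'$ and strictly below the topmost $1$ of their column would remain unspecified; each such configuration has a nearest $1$ above it, that $1$ belongs to a unique one-zero configuration, and this assignment is injective, so, since all one-zero configurations are already fixed and there are at most $\sum_i|A_i\setminus A_{i+1}|\le kn$ of them, and the location of a configuration inside the run governed by its one-zero configuration allows at most $n$ choices, there are at most $n^{kn}$ ways to complete this stage. At that point $\cM$ is fully reconstructed.

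Multiplying the three bounds gives
\[
(4en)^{kn}\cdot (n+1)^{4kn}\cdot n^{kn}=e^{O(kn\log n)}
\]
(using $1\le k<n$), which is the claim. The step that really needs care is the third one: one must be sure that after the first two stages all residual ambiguity is confined to the $A'$-columns and is governed by a quantity that injects into the already-determined set of one-zero configurations, so that the last factor stays $n^{kn}$ and does not blow up to something of order $2^{\Theta(kn^2)}$. The first two stages are routine binomial and product estimates.
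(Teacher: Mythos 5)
Your proposal is correct and follows essentially the same three-stage reconstruction as the paper's own proof: recording the sets $A_i\setminus A_{i+1}$ (the one-zero configurations), then the top $1$ of each column, then the remaining zero-one configurations via the same injection into the already-placed one-zero configurations, with the identical bounds $(4en)^{kn}\cdot(n+1)^{4kn}\cdot n^{kn}=e^{O(kn\log n)}$. No substantive difference to report.
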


\subsection{Proofs of the lower bounds}

\begin{proof}[Proof of the lower bound in Theorem~\ref{main}.]
Let $G=G(V_1, V_2)$ be a bipartite graph with both parts of size $n$ and suppose that $G$ belongs to $\cG(n,n,k,\min)$.
By Lemma~\ref{l:noeltsBIP} we may suppose that $G$ has a reduced $k$-min-difference representation
 $(A_1,\dots, A_n, B_1, \dots , B_n)$ such that each representing set is a subset of $S$, where $|S|=4kn$.
There are permutations $\pi$ and $\rho\in S_n$ which rearrange the sets according their sizes
$|A_{\pi(1)}|\leq \dots \leq |A_{\pi(n)}|$ and $|B_{\rho(1)}|\leq \dots \leq |B_{\rho(n)}|$.
Consider the $V_i\times S$ matrices, $\cM_i$, $i=1,2$ whose $i$'th row is the $0$-$1$ characteristic vector of $A_{\pi(i)}$ and $B_{\rho(i)}$, respectively.
The permutations $\pi$, $\rho$ and the matrices $\cM_1$, $\cM_2$ completely describe $G$.
The matrices $\cM_i$ satisfies properties (P1) and (P2), so Claim~\ref{cl:M}
  yields the following upper bound  for the number of such fourtuples $(\pi, \rho, \cM_1, \cM_2)$
\begin{equation}\label{eq:Gnnk}   |\cG(n,n,k,\min)| \leq \# {\rm \, of  \,} (\pi, \rho, \cM_1, \cM_2)'{\rm s} \leq (n!)^2 n^{(12+o(1))kn}= e^{O(kn \log n)}.
\end{equation}
Here the right hand side is $o(2^{n^2})$ if $k \leq  0.057 n / (\log n)$ implying that
  $\rmin(G)>0.057 n / (\log n)$ for almost all the $2^{n^2}$ bipartite graphs.
\end{proof}

\begin{proof}[Proof of the lower bound for the random bipartite graph.]${}$\\
Recall that in a random graph $G\in \Gnp$, each of the $\binom{n}{2}$ edges occurs independently with probability $p$.
Similarly, $\Gnnp$ denotes the class of graphs $\Gnn$ with the probability of a given graph $G\in \Gnn$  is
 \[   p^{e(G)}(1-p)^{n^2 - e(G)}.   
  \]
Here the right hand side is at most  $(\max\{ p, 1-p\})^{n^2}$.
This implies that for any class of graphs $\cA\subset \Gnn$  the probability $\Pr (G\in \cA) $
is at most $|\cA|$ times this upper bound. 
If the class of graphs $\cA$ is too small, namely 
\[ 
  |\cA| = o\left((\min\{  \frac{1}{p}, \frac{1}{1-p} \})^{n^2}\right), 
 \] 
 then for $\Gnnp$ one has 
\begin{equation}\label{eq:pr} \Pr (G\in \cA)\to 0. 
\end{equation} 
Taking $\cA:=\Gnnkmin$ with a sufficiently small $k$, we obtain

\begin{corollary}\label{co:6}
For constant $0< p< 1$ there exists a constant $c=c_{1, \rm min}(p)>0$ such that the following holds for $G\in \Gnnp$  with high probability
    as $n\to \infty$
%%%  \begin{equation}\label{eq:Knnp_lower}
\[  c \frac{n}{\log n} < \rmin (G).
\]
%%%   \end{equation}
  \end{corollary}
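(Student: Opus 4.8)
The plan is to derive Corollary~\ref{co:6} directly from the counting bound already established in the proof of the lower bound in Theorem~\ref{main}, feeding it through the probabilistic inequality \eqref{eq:pr}. Concretely, recall that we showed $|\Gnnkmin| = e^{O(kn\log n)}$; more precisely the bound in \eqref{eq:Gnnk} is of the form $e^{C_0 kn\log n}$ for an explicit absolute constant $C_0$ (one may take $C_0 = 13$, say, absorbing the $(n!)^2$ and the $o(1)$ into the exponent). On the other hand, $(\min\{1/p,\,1/(1-p)\})^{n^2} = e^{\gamma(p) n^2}$ where $\gamma(p) := -\log(\max\{p,1-p\}) > 0$ is a positive constant depending only on $p$.

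First I would fix $p\in(0,1)$ and set $c = c_{1,\rm min}(p) := \gamma(p)/(2C_0)$. Then for any $k \le c\, n/\log n$ we have $C_0 k n\log n \le \tfrac12 \gamma(p) n^2$, so that $|\Gnnkmin| \le e^{\gamma(p) n^2/2} = o\!\left(e^{\gamma(p) n^2}\right) = o\!\left((\min\{1/p,1/(1-p)\})^{n^2}\right)$. Taking $\cA := \Gnnkmin$ with this choice of $k$ — say $k = \lfloor c\,n/\log n\rfloor$ — the hypothesis of \eqref{eq:pr} is satisfied, hence $\Pr(G\in \Gnnkmin)\to 0$ for $G\in\Gnnp$. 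Since $G\notin\Gnnkmin$ is exactly the statement $\rmin(G) > k = \lfloor c\,n/\log n\rfloor$, and $\lfloor c\,n/\log n\rfloor \ge c' n/\log n$ for a slightly smaller constant $c'$ once $n$ is large, we conclude $\rmin(G) > c' n/\log n$ with high probability, which is the desired statement (after renaming $c'$ to $c$).

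There is essentially no obstacle here: the entire content — the structural reduction (Lemma~\ref{l:noeltsBIP}) and the matrix-counting estimate (Claim~\ref{cl:M}, inequality \eqref{eq:Gnnk}) — has already been carried out, and all that remains is to observe that the $e^{O(kn\log n)}$ upper bound beats the $e^{-\Theta(n^2)}$ per-graph probability bound whenever $k$ is a sufficiently small multiple of $n/\log n$. The only point requiring a modicum of care is that the relevant per-graph weight bound is $(\max\{p,1-p\})^{n^2}$ rather than the $2^{-n^2}$ figure that appears for the uniform model in Theorem~\ref{main}; this replaces the numerical threshold $0.057$ by a constant depending on $p$ through $\gamma(p)$, but the argument is otherwise verbatim. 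One should also note, as the paragraph preceding the corollary already does, that $\Pr(G\in\cA)\le |\cA|\cdot(\max\{p,1-p\})^{n^2}$ holds because the probability of each individual graph in $\Gnnp$ is at most $(\max\{p,1-p\})^{n^2}$; summing over the at most $|\cA|$ graphs in $\cA$ gives the claim, and then \eqref{eq:pr} is immediate.
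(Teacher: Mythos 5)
Your proposal is correct and follows the paper's own route exactly: it takes $\cA=\Gnnkmin$, combines the counting bound \eqref{eq:Gnnk} (i.e.\ $|\Gnnkmin|=e^{O(kn\log n)}$ from Lemma~\ref{l:noeltsBIP} and Claim~\ref{cl:M}) with the per-graph weight bound $(\max\{p,1-p\})^{n^2}$, and applies \eqref{eq:pr}. The only difference is that you spell out explicitly how the constant $c_{1,\min}(p)=\gamma(p)/(2C_0)$ arises, which the paper leaves implicit in the phrase ``with a sufficiently small $k$.''
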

\end{proof}

%%%%%%%%%%%%%%%%%%%%%%%%%%%%%%%%%%%%%%%%%%%%%%%%%%%%%%%%5
\section{Maximum and average difference representations}

Boros, Gurvich and Meshulam~\cite{BGM04} also defined $k$-{\em max-difference} representations and $k$-{\em average-difference} representations of a graph $G$ in a natural way, that is, the vertices $i$ and $j$ are adjacent if and only if for the corresponding sets $A_i$, $A_j$ we have $\max\{|A_i\setminus A_j|,|A_j\setminus A_i|\}\geq k$ and $(|A_i\setminus A_j|+|A_j\setminus A_i|)/2\geq k$, respectively.  
Analogously to $\rmin$ we can define $f_{\max}(G)$ and $f_{\avg}(G)$. 
Since for every graph $G$ a Kneser representation is a min-dif, average-difference, and max-difference representation as well we get
\begin{equation}\label{eq:19-}   \rmin(G),\, \ravg(G), \, \rmax(G) \leq \fKn(G) \leq n-1.
\end{equation}
Let    $\Gnkmax$, $\Gnnkmax$, ($\Gnkavg$,  $\Gnnkavg$) denote the family of graphs $G\in \Gn$ and in $\Gnn$ with
 $n$ labeled vertices $V$  or with partite sets $V_1$ and $V_2$, $|V_1|=|V_2|=n$, respectively, such that $\rmax(G)\leq k$ ($\ravg(G)\leq k$, respectively).

It was proved in~\cite{BGM04} that $f_{\max}$ and $f_{\min}$ are not bounded by a constant,  for a matching of size $t$ one has $f_{\max}(tK_2)= \Theta(\log t)$ and $f_{\avg}(tK_2)= \Theta(\log t)$. (It turns out that $f_{\min}(tK_2)=1$.) The proof of Theorem~\ref{main} can be easily adapted for these parameters  for $\Gnnp$ as well. 
Even more, we can handle the general case $G\in \Gnp$, too.

\begin{corollary}\label{co:7}
For constant $0< p< 1$ there exists a constant $c=c(p)>0$ such that the following holds for $G\in \Gnnp$  with high probability as $n\to \infty$
 \begin{equation}\label{eq:Knn_lower}
  c \frac{n}{\log n} <  \ravg (G), \,  \rmax (G) .
  \end{equation}
Similarly for $G\in \Gnp$ with high probability we have
 \begin{equation}\label{eq:Kn_lower}
    c \frac{n}{\log n} <    \ravg (G), \,  \rmax (G).
  \end{equation}
  \end{corollary}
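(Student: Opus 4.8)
The plan is to reuse, almost verbatim, the counting argument of Section~3 that yielded the lower bound in Theorem~\ref{main}. Inspecting that argument, the only properties of a $k$-min-difference representation $(A_i)$ that it used are: (i) if $ij\notin E(G)$ then $\min\{|A_i\setminus A_j|,|A_j\setminus A_i|\}\le k-1$; and (ii) in a \emph{reduced} representation a ground-set element survives only because it lies in a ``critical'' difference $A_i\setminus A_j$ for some edge $ij$, of size just large enough to certify that edge. Both persist for the max- and avg-variants. For (i): non-adjacency of $i,j$ now means $\max\{|A_i\setminus A_j|,|A_j\setminus A_i|\}\le k-1$, respectively $|A_i\setminus A_j|+|A_j\setminus A_i|\le 2k-1$, and in either case the minimum of the two quantities is $\le k-1$. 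For (ii): deleting an element from every set containing it can only decrease each of the two difference-sizes of any pair, hence it can only destroy edges, so a surviving element lies in some $A_i\setminus A_j$ with $|A_i\setminus A_j|=k$ (max-dif), respectively with $|A_i\setminus A_j|+|A_j\setminus A_i|=2k$ and therefore $|A_i\setminus A_j|\le 2k$ (avg-dif).

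Granting (i) and (ii), the proof of Lemma~\ref{l:noeltsBIP} goes through for reduced $k$-max-difference and $k$-avg-difference representations of bipartite graphs, with $4kn$ replaced by an absolute constant times $kn$: property (i) is precisely what makes $|A'|,|B'|\le (n-1)k$ via the chain argument, and property (ii) makes every nonempty atom of $D$ lie inside a critical rectangle $\widetilde{A_m}\setminus\widetilde{B_\ell}$ or $\widetilde{B_p}\setminus\widetilde{A_q}$ --- now of at most $2k$ elements --- so that $|D|\le 4(n+1)k$ and $|S|\le 7kn$. With $|S|=O(kn)$, and since the constraints (P1), (P2) on the matrices $\cM_1,\cM_2$ are unchanged, the count behind Claim~\ref{cl:M} gives, exactly as in~\eqref{eq:Gnnk}, $|\Gnnkmax|,\ |\Gnnkavg|\le (n!)^2\, n^{O(kn)}=e^{O(kn\log n)}$. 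Feeding $\cA=\Gnnkmax$ (resp.\ $\Gnnkavg$) into~\eqref{eq:pr}, the right side is $o((\min\{1/p,1/(1-p)\})^{n^2})$ as soon as $k\le c(p)\,n/\log n$ for a sufficiently small $c(p)>0$, whence $\Pr(G\in\cA)\to 0$. This is~\eqref{eq:Knn_lower}.

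The non-bipartite case~\eqref{eq:Kn_lower} would follow from the same scheme provided one had $|\Gnkmax|,|\Gnkavg|\le e^{O(kn\log n)}$, the $\Gnp$-analogue of~\eqref{eq:pr} carrying $\binom n2$ in place of $n^2$. Here is where I expect the real work to lie: the chain structure of Lemma~\ref{l:noeltsBIP} exploited that each part of a bipartite graph is an independent set, whereas $G\in\cG(n,p)$ has independence number only $O(\log n)$, and Lemma~\ref{l:noelts} alone gives merely $|S|\le kn^2$, far too large for the column count. My plan is to fix, with high probability, a proper colouring of $G$ into $t=\Theta(n/\log n)$ independent classes $V_1,\dots,V_t$, each of size $O(\log n)$, run the chain argument of Lemma~\ref{l:noeltsBIP} inside each $V_r$ to obtain a residual set $A'_r$ with $|A'_r|=O(k\log n)$ (so $\sum_r|A'_r|=O(kn)$), off of which every ground element has a monotone membership threshold within each class, and then count the graphs so presented. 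The obstacle is the cross-class part of the ground set: an element's ``type'' is now a vector of $t$ class-thresholds, so there are $\prod_r(|V_r|+1)=e^{\Theta(n\log\log n/\log n)}$ conceivable atom types --- far too many to simply record a multiplicity for each. The natural thing to try is to charge every surviving cross-class element to one of the $\le\binom n2$ critical edges it witnesses (each absorbing $\le 2k$ elements, so that only $O(kn^2)$ types actually occur) and to reconstruct $G$ from the list of critical edges together with their witness sets rather than from a full representation matrix; compressing this description down to $e^{O(kn\log n)}$ possibilities, so that the analogue of~\eqref{eq:pr} still applies with $k$ a small multiple of $n/\log n$, is the crux of~\eqref{eq:Kn_lower} and the step whose bookkeeping I expect to be delicate.
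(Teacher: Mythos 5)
Your first half, the bipartite bound~\eqref{eq:Knn_lower}, is essentially correct, though you work harder than necessary: you miss that for the max- and avg-variants \emph{non}-adjacency bounds the whole symmetric difference, $|A_i\bigtriangleup A_j|\le 2k-2$ (resp.\ $\le 2k-1$), not merely the smaller of the two one-sided differences. Since each part of a bipartite graph is independent, this holds for every pair inside a part, and a reduced representation then has $|\bigcup_i A_i|<2kn$ immediately --- no re-run of the critical-rectangle argument of Lemma~\ref{l:noeltsBIP} is needed (and your adaptation of it to the avg-case is itself shaky, since the critical set $A_m\bigtriangleup B_\ell$ is not a single rectangle in the grid picture). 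After that, Claim~\ref{cl:M} and~\eqref{eq:pr} finish the job as you say.

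The second half,~\eqref{eq:Kn_lower}, has a genuine gap, which you acknowledge. The observation you are missing is exactly the one above pushed one step further: because $|A_i\bigtriangleup A_j|\le 2k-2$ (resp.\ $2k-1$) for every non-edge $ij$, and because $\overline G$ has diameter at most $2$ with high probability for $G\in\Gnp$, the triangle inequality for symmetric differences gives $|A_i\bigtriangleup A_j|\le 4k-4$ for \emph{all} pairs of vertices (route $i$ to $j$ through a common vertex $w$ with $iw,wj\notin E(G)$). Hence a reduced representation has ground set of size at most $(4k-4)n$, and $|A_i\setminus A_j|\le 2k-2$ whenever $|A_i|\le|A_j|$, so properties (P1)--(P2) hold with $2k-2$ in place of $k$ and Claim~\ref{cl:M} applies verbatim to the whole vertex set at once. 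This kills the problem you were fighting --- the small independence number of $G(n,p)$ is irrelevant, and no colouring into independent classes or charging of cross-class elements is needed. Your proposed decomposition scheme, with its $e^{\Theta(n\log\log n/\log n)}$ atom types and unresolved compression step, is not a proof as it stands; note also that this diameter trick is precisely why the corollary extends to $\Gnp$ for $\ravg$ and $\rmax$ while Theorem~\ref{main} for $\rmin$ does not (non-adjacency under min-difference bounds only one of the two one-sided differences, so the symmetric difference of non-adjacent vertices is uncontrolled).
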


These lower bounds together with the upper bounds from Corollary~\ref{co:3} below imply that for almost all
  $n$ vertex graphs, and for almost all bipartite graphs on $n+n$ vertices, $\ravg(G)$ and $\rmax(G)$ are $\Theta(n/(\log n))$.

\begin{proof}[Sketch of the proof.]
If $G\in \Gnnkmax$ (and if $G\in \Gnnkavg$) and  $(A_1, \dots, A_n, B_1,$ $ \dots, B_n)$ is a $k$-max-difference
($k$-average-difference) representation
then
\begin{equation}\label{eq:9}
  |A_i \bigtriangleup A_j| \leq 2k-2  \enskip (\leq 2k-1)
  \end{equation}
holds for each pair $i,j$.
If the representation is reduced, then we obtain (without the tricky proof of Lemma~\ref{l:noeltsBIP}) that $|\bigcup_i A_i|< 2kn$, and the same holds for
$|\bigcup_i B_i|$, too.
The conditions of Claim~\ref{cl:M} are satisfied implying
%%%  \begin{equation}\label{eq:Gnnk+}
\[   |\cG(n,n,k,\max)|, |\Gnnkavg| = e^{O(kn \log n)}.
\]
%%%  \end{equation}
We complete the proof of~\eqref{eq:Knn_lower} applying~\eqref{eq:pr} as it was done at the end of the previous Section.

Consider a graph  $G\in \Gnkmax$ and let  $(A_1, \dots, A_n)$ be a reduced $k$-max-difference  representation.
(The case of $k$-average-difference representation can be handled in the same way, and the details are left to the reader).
The only additional observation we need is that since~\eqref{eq:9} holds for
  each non-edge $\{i,j\}$, we have $ |A_i \bigtriangleup A_j| \leq 4k-4$  for {\em all} pairs of vertices whenever $\diam (\overline{G})\leq 2$.
Thus for every reduced representation (in case of  $\diam (\overline{G})\leq 2$) one has $|\bigcup_i A_i|\leq (4k-4)n$.
Also, $|A_i\setminus A_j| \leq 2k-2$ for $|A_i|\leq |A_j|$.
Then the conditions of Claim~\ref{cl:M} are fulfilled (with $2k-2$ instead of $k$) implying the following version of~\eqref{eq:Gnnk}
%%%  \begin{equation}\label{eq:Gnnkmax}
\[  |\cG(n,n,k,\min)\setminus \cG_2(n)| = e^{O(kn \log n)},
\]
%%%  \end{equation}
where $\cG_2(n)$ denotes the class of graphs with $G\in \Gn, \diam (\overline{G})> 2$.

We complete the proof of~\eqref{eq:Kn_lower} by applying~\eqref{eq:pr} and the fact that
\[ \diam(G)\geq 2
\]
 holds with high probability for $G\in \Gnp$.
\end{proof}

%%%%%%%%%%%%%%%%%%%%%%%%%%%%%%%%%%%%%%%%%%%%%%%%%%%%%%5

\section{Clique covers of the edge sets of graphs}

We need the following version of Chernoff's inequality (see, e.g.,~\cite{AlonSpencer4}).
Let $Y_1, \dots, Y_n$ be mutually independent random variables with $E[Y_i]\leq 0$ and all $|Y_i|\leq 1$. Let $a \geq 0$. Then
\begin{equation}\label{Chernoff}
   \Pr [Y_1+ \dots + Y_n > a] < e^{-a^2/(2n)}.
 \end{equation}

A finite {\em linear space} is a pair $(P, \cL)$ consisting of a set $P$ of elements (called {\em points}) and a set $\cL$ of subsets of $P$ (called {\em lines}) satisfying the following two properties.\\
${}$\hspace{0.8cm}$\rm (L1)$\quad Any two distinct points $x, y \in P$ belong to exactly one line $L=L(x,y)\in \cL$.
\\
${}$\hspace{0.8cm}$\rm (L2)$\quad Any line has at least two points. \\
In other words, the edge set of the complete graph $K(P)$ has a clique decomposition into the complete graphs $K(L)$, $L\in \cL$.

\begin{lemma}\label{l:linear}
For every positive integer $n$ there exists a linear space $\cL=\cL_n$ with lines $L_1, \dots, L_m$ such that
$m= n+ o(n)$, every edge has size $(1+o(1))\sqrt{n}$, and every point belongs to $(1+o(1))\sqrt{n}$ lines.
\end{lemma}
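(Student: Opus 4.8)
The plan is to realize $\cL_n$ as (essentially) a finite projective plane. Recall that for every prime power $q$ the Desarguesian plane $PG(2,q)$, whose points and lines are the $1$- and $2$-dimensional subspaces of $\mathbb{F}_q^3$, is a linear space: it has exactly $q^2+q+1$ points and $q^2+q+1$ lines, every line contains exactly $q+1$ points, every point lies on exactly $q+1$ lines, any two distinct points span a unique common line (so property (L1) holds), and every line has $q+1\ge 2$ points (so property (L2) holds). Thus whenever $n=q^2+q+1$ for a prime power $q$ one may simply take $\cL_n=PG(2,q)$, and all three requirements — $m=n$ lines, every line of size $q+1$, every point on $q+1$ lines — hold exactly, with $q+1=(1+o(1))\sqrt n$.

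For a general $n$ I would first pick a prime $q$ with $q=(1+o(1))\sqrt n$. By the prime number theorem the number of primes in $[\sqrt n,(1+\varepsilon)\sqrt n]$ is asymptotically $2\varepsilon\sqrt n/\log n\to\infty$ for every fixed $\varepsilon>0$, so there is a prime $q$ with $q=(1+o(1))\sqrt n$; for such a $q$ one has $q+1=(1+o(1))\sqrt n$ and $q^2+q+1=n+o(n)$. Hence $\cL_n:=PG(2,q)$ already satisfies every clause of the lemma, now with $m=q^2+q+1=n+o(n)$ lines (the number of points is $n+o(n)$ as well, which is all the statement requires). If in some application one additionally wants the point set to have size exactly $n$, one can delete a uniformly random set $X$ of $q^2+q+1-n=o(n)$ points and discard any line left with fewer than two points: for a fixed line $L$ the variable $|L\cap X|$ has mean $(q+1)\cdot o(1)=o(\sqrt n)$, and since sampling without replacement is no more spread out than sampling with replacement, the Chernoff bound \eqref{Chernoff} together with a union bound over the $q^2+q+1$ lines shows that with positive probability $|L\cap X|=o(\sqrt n)$ simultaneously for every line. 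Fixing such an $X$, no line drops below two points, so nothing is discarded; (L1) and (L2) are clearly inherited, the number of lines is still $q^2+q+1=n+o(n)$, every line has $q+1-o(\sqrt n)=(1+o(1))\sqrt n$ points, and every point still lies on $q+1=(1+o(1))\sqrt n$ lines.

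The only genuine obstacle is the arithmetic step of landing at a value $q^2+q+1$ within $o(n)$ of the prescribed $n$: this needs a prime (or prime power) $q$ with $q/\sqrt n\to 1$, which is exactly where the prime number theorem — or a classical prime-gap estimate — is used, Bertrand's postulate alone being too weak since it only controls primes up to a factor of $2$. Everything after that is reading off the standard parameters of $PG(2,q)$, and, for the optional exact-count refinement, a routine concentration computation with the Chernoff inequality already recorded above.
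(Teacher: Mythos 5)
Your proof is correct and follows essentially the same route as the paper, which takes the affine plane $AG(2,q)$ for the smallest prime power $q$ with $q^2\ge n$ and restricts it to a random $n$-point subset; your use of $PG(2,q)$ with a prime from the Prime Number Theorem and random deletion of the $o(n)$ excess points is the same idea, and your Chernoff/without-replacement concentration step actually supplies details the paper's two-line ``folklore'' sketch leaves implicit.
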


\begin{proof} (Folklore). If $n=q^2$ where $q>1$ is a power of a prime then  we can take the $q^2+q$ lines of an affine geometry $AG(2,q)$.
Each line has exactly $q=\sqrt{n}$ points and each point belongs to $q+1$ lines.
In general, one can consider the smallest power of prime $q$ with $n\leq q^2$ (we have $q=(1+o(1))\sqrt{n}$) and
  take a random $n$-set $P\subset {\mathbb F}_q^2$ and the lines defined as $P\cap L$, $L\in \cL(AG(2,q))$.
\end{proof}

\subsection{Thickness of clique covers}
The {\em clique cover number} $\theta_1(G)$ of a graph $G$ is the minimum number of cliques required to cover the edges of graph $G$.
Frieze and Reed~\cite{FriezeReed} proved that for $p$ constant, $0< p<1$, there exist constants $c'_i=c'_i(p)>0$, $i=1,2$ such that for $G\in \Gnp$ with high probability
%%%  \begin{equation}\label{eq:FR}
\[   c'_1\frac{n^2}{(\log n)^2} < \theta_1(G) <     c'_2\frac{n^2}{(\log n)^2}.
\]
%%%   \end{equation}
They note that `a simple use of a martingale tail inequality shows that $\theta_1$ is close to its mean with very high probability'. 
We only need the following consequence concerning the expected value. 
 \begin{equation}\label{eq:FR2} 
   E(\theta_1(G)) <     c'_3\frac{n^2}{(\log n)^2}. 
  \end{equation} 

The {\em thickness} $\theta_0$ of a clique cover $\cC:= \{ C_1, \dots, C_m\}$ of $G$ is the maximum degree of the hypergraph $\cC$, i.e.,
 $\theta_0(\cC):= \max_{v\in V(G)} \deg_\cC(v)$.
The minimum thickness among the clique covers of $G$ is denoted by $\theta_0(G)$.
%%% The clicques in $\cC$ covering a vertex $v\in V(G)$ also cover the {\em vertices} of the neighborhood $N_G(v)$ of $v$, thus they can define a coloring of the
%%%   complement restricted to $N(v)$. Hence we obtain the useful lower bound
%%%  \begin{equation}\label{eq:thickness1}
%%%       \chi(\overline{G}|N(v)) \leq   \theta_0(G).
%%%   \end{equation}

A clique cover $\cC$ corresponds to a set representation  $v\mapsto A_v$ in a natural way $A_v:= \{ C_i: v\in C_i\}$ with the property that
 $A_u$ and $A_v$ are disjoint if and only if $\{u,v\}$ is a non-edge of $G$.
The size of the largest $A_v$ is the thickness of $\cC$.
For $k> \theta_0(\cC)$ one can add $k-|A_v|$ distinct extra elements to $A_v$ (for each $v\in V(G)$), thus obtaining a
   Kneser representation of rank $k$ of the complement of $G$, $\overline{G}$.
This relation can be reversed, yielding
\begin{equation}\label{eq:thickness2}
      \theta_0(G) \leq     \fKn(\overline{G})  \leq \theta_0(G) +1.
  \end{equation}

\begin{theorem}\label{th:thickness}
For constant $0< p< 1$ there exist constants $c_i=c_i(p)>0$, $i=1,2$ such that for $G\in \Gnp$ with high probability
%%% \begin{equation}\label{eq:randomthickness}
\[   c_1 \frac{n}{\log n}< \theta_0(G) < c_2\frac{n}{\log n}.
\]
%%% \end{equation}
\end{theorem}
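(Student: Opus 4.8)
The plan is to prove the two bounds separately, using the relation \eqref{eq:thickness2} together with the Frieze--Reed estimates on the clique cover number $\theta_1$ and the linear-space construction of Lemma~\ref{l:linear}.

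For the \emph{upper bound}, I would start from an optimal (or near-optimal) clique cover $\cC$ of $G\in\Gnp$ and modify it to control the thickness. By \eqref{eq:FR2} the expected number of cliques is $O(n^2/(\log n)^2)$, so with probability bounded away from $0$ (in fact w.h.p., by the martingale concentration Frieze and Reed mention) we may fix a cover $\cC$ with $|\cC| \le c_3' n^2/(\log n)^2$. The issue is that a single vertex might lie in very many of these cliques. The trick is to bound the thickness by a \emph{double counting / averaging} argument: the sum of clique sizes is at least the number of covered edges divided by nothing useful directly, so instead I would argue that a random vertex has small expected degree in $\cC$, or — more robustly — replace the naive cover by one built from the linear space $\cL_n$ of Lemma~\ref{l:linear}. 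Concretely: partition $V(G)$ and use the $(1+o(1))\sqrt{n}$-regular, $(1+o(1))\sqrt{n}$-uniform linear space to get a clique \emph{decomposition} of $K_n$ into $n+o(n)$ cliques each of size $\approx\sqrt n$ and each vertex in $\approx\sqrt n$ of them; intersecting each line-clique with the cliques of $\cC$ inside it refines $\cC$ into a cover whose thickness at each vertex is at most (number of $\cL_n$-lines through that vertex) $\times$ (max number of $\cC$-cliques meeting a fixed $\sqrt n$-set). A cleaner route, which I expect is the intended one: observe $\theta_0(G)\le \fKn(\overline G)\le \theta_0(G)+1$, and separately show $\fKn(\overline G) \le c_2 n/\log n$ w.h.p. by exhibiting a Kneser representation of $\overline G$ whose ground set has size $O(n^2/(\log n)^2)$ (from $\theta_1$) but arranged via $\cL_n$ so each vertex gets only $O(n/\log n)$ labels; the Chernoff bound \eqref{Chernoff} is then used to show that when we spread the $\theta_1(G)$ cliques over the $n+o(n)$ lines, no line is overloaded by more than a constant factor of the average, so each vertex lands in $O(\sqrt n)\cdot O(\sqrt n/\log n) = O(n/\log n)$ cliques.

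For the \emph{lower bound}, the key observation is that a clique cover of small thickness cannot cover too many edges: if every vertex lies in at most $t$ cliques, then the pairs covered number at most $\sum_i \binom{|C_i|}{2}$, and since $\sum_i |C_i| = \sum_v \deg_\cC(v) \le tn$, convexity does not immediately bound $\sum \binom{|C_i|}{2}$ — so instead I would bound the number of \emph{graphs} coverable with thickness $\le t$. A vertex with $\le t$ labels chosen from a ground set of size $m=|\cC|$ determines its neighborhood; but we also get a bound on $m$: each clique has size at most $n$, and $\sum|C_i|\le tn$ forces $m \le tn$ only if cliques are nontrivial, which we may assume. So the number of possible set-systems $(A_v)_{v\in V}$ with $|A_v|\le t$ and ground set of size $\le tn$ is at most $\big(\sum_{j\le t}\binom{tn}{j}\big)^n \le (tn)^{tn} = e^{tn\log(tn)(1+o(1))}$. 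This is $o(2^{\binom n2})$ precisely when $t = o(n/\log n)$ — wait, we need $tn\log(tn) < \binom n2 \log 2$, i.e. $t < c_1 n/\log n$. Hence if $\theta_0(G)\le c_1 n/\log n$ for too many graphs we contradict the fact that a random $G\in\Gnp$ concentrates (the Frieze--Reed lower bound $\theta_1(G) > c_1' n^2/(\log n)^2$ alone suffices: thickness $t$ and $n$ vertices give at most $tn$ cliques, so $\theta_1(G)\le \theta_0(G)\cdot n$, hence $\theta_0(G) \ge \theta_1(G)/n > c_1' n/(\log n)^2$ — this only gives the weaker bound $n/(\log n)^2$, so I must use the counting argument above, not just $\theta_1$).

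The main obstacle, as the last remark shows, is the \textbf{lower bound}: the trivial inequality $\theta_1(G)\le n\,\theta_0(G)$ loses a factor of $\log n$, so one cannot simply quote Frieze--Reed. The right approach is the entropy/counting bound sketched above — counting graphs admitting a thickness-$t$ clique cover and comparing to $2^{\binom n2}$ (for almost all graphs) or to $(\min\{1/p,1/(1-p)\})^{\binom n2}$ via \eqref{eq:pr}-type reasoning (for $\Gnp$) — which gives the tight threshold $t \asymp n/\log n$. A secondary technical point is verifying that in the upper-bound construction the Chernoff bound \eqref{Chernoff} applied to the assignment of Frieze--Reed cliques to the $n+o(n)$ lines of $\cL_n$ genuinely yields uniform per-line (hence per-vertex) bounds; this is routine but needs the independence structure to be set up carefully, e.g. by a random partition of the clique cover.
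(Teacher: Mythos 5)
Your proposal is correct in outline, but the two halves compare differently with the paper. For the lower bound you take a genuinely different route: the paper's argument is a one-liner, namely $\theta_0(G)\ge \Delta(G)/(\omega(G)-1)$, because the cliques through a fixed vertex $v$ must cover all $\deg(v)$ edges at $v$ and each contributes at most $\omega(G)-1$ of them; with $\Delta\approx np$ and $\omega\approx 2\log n/\log(1/p)$ w.h.p.\ this immediately gives $c_1(p)\,n/\log n$. Your counting argument (a graph with a thickness-$t$ cover is determined by a set system $(A_v)_{v\in V}$ with $|A_v|\le t$ over a ground set of size at most $tn$, so at most $e^{O(tn\log n)}$ graphs have $\theta_0\le t$, which is compared with $(\max\{p,1-p\})^{\binom n2}$ as in \eqref{eq:pr}) is also valid and is exactly in the spirit of the paper's Section~3 lower bounds for $\rmin$; it is heavier than necessary here, but you correctly diagnosed that quoting $\theta_1(G)\ge c'_1 n^2/(\log n)^2$ alone loses a factor of $\log n$. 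For the upper bound, your ``cleaner route'' is the paper's proof (linear space $\cL_n$ from Lemma~\ref{l:linear}, Frieze--Reed, Chernoff), but one mechanism needs correcting: one does not take a global optimal cover and ``spread'' its cliques over the lines, nor refine a global cover by intersecting with lines (a clique of a global cover need not lie inside any line, and the intersection scheme does not control the thickness). Instead one covers each induced subgraph $G_i=G[L_i]$ separately by an optimal clique cover $\cC_i$ and takes the union; the bound $E(X_i(v))=O(\sqrt n/\log n)$ on the per-line thickness at $v$ comes from \eqref{eq:FR2} together with $\sum_{v\in L_i}X_i(v)=\sum_{C\in\cC_i}|C|\le\theta_1(G_i)\,\omega(G_i)$ and the symmetry of the $\ell_i$ vertices of $L_i$; and Chernoff \eqref{Chernoff} is applied, for each fixed vertex $v$, to the sum of the $(1+o(1))\sqrt n$ variables $X_i(v)$ over the lines through $v$, which are independent because distinct lines share no vertex pairs and hence the graphs $G_i$ depend on disjoint sets of edge indicators. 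With those adjustments your sketch matches the paper's argument and closes.
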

\begin{proof}
The lower bound is easy.
The maximum degree $\Delta(G)$ of $G \in \Gnp$ with high probability satisfies $\Delta\approx np$.
As usual we write $a_n \approx b_n$ as $n$ tends to infinity.
Also for the unproved but well-known statements concerning the random graphs see the monograph~\cite{JLR_book}.
The size of the largest clique $\omega=\omega(G)$ with high probability satisfies $\omega \approx 2 \log n/ (\log (1/p))$.
Since $\theta_0\geq \Delta/(\omega-1)$ we may choose $c_1\approx p (\log (1/p))/2$.

The upper bound probably can be proved by analyzing and redoing the clever proof of Frieze and Reed concerning $\theta_1(G_{n,p})$.
Probably their randomized algorithm yields the upper bound for the thickness, too.
Although there are steps in their proof where they remove from $G$ (as cliques of size 2) an edge set of size $O(n^{31/16})$ and
  one needs to show that these edges are well-distributed.
However, one can easily deduce the upper bound for $\theta_0(G_{n,p})$ directly only from Equation~\eqref{eq:FR2}.

Given $n$, fix a  linear hypergraph $\cL=\cL_n$ with point set $[n]$ and hyperedges $L_1, \dots, L_m$ provided by
Lemma~\ref{l:linear}.
We have  $m= n+ o(n)$, $\ell_i:=|L_i|=(1+o(1))\sqrt{n}$, and every point $v$ belongs to $b_v=(1+o(1))\sqrt{n}$ lines.
Build the random graph $G\in\Gnp$ in $m$ steps by taking a $G_i\in {\mathcal G}(L_i, p)$.
Let $\cC_i$ be a clique cover of $G_i$ with $\theta_1(G_i)$ members, $\cC= \cup_{1\leq i\leq m} \cC_i$.

Let $X_i(v)$ denote the thickness of $\cC_i$ at the point $v\in L_i$.
We consider $X_i(v)$ as a random variable, whose distribution is depending only on $\ell_i$ and $p$.
For every $L_i$, we have
\[    \sum_{v; v\in L_i} X_i(v)= \sum _{C\in \cC_i} |C| \leq \theta_1(G_i) \omega(G_i).
  \]
Here  with {\em very} high probability $\omega_i=\omega(G_i)$ satisfies $\omega_i \approx 2 \log  \ell_i/ (\log (1/p)) $.
%%%   \approx \log  n/ (\log (1/p))$.
Then~\eqref{eq:FR2} implies that
\begin{equation*} E(\sum_{v\in L_i}  X_i(v))  \leq   c'_3\frac{\ell_i^2}{(\log \ell_i)^2} \times (1+o(1)) \frac{2 \log  \ell_i}{ \log (1/p)}.
\end{equation*}
Since the distributions of $X_i(u)$ and $X_i(v)$ are identical (for $u,v\in L_i$), and there are $\ell_i$ terms on the left hand side, we obtain that
\[
   E(X_i(v))\leq (1+o(1))   \frac{2c'_3}{ \log (1/p)} \times   \frac{\ell_i}{\log \ell_i} < c \frac{\sqrt {n}}{\log n}.
\]
Here we chose $c> 4c_3'/ (\log (1/p))$.

Let $X(v)$ be the thickness of $\cC$ at $v$.
We have $X(v)= \sum _{L_i \ni v} X_i(v)$, where this is a sum of $b:=b_v= (1+o(1))\sqrt{n}$ mutually independent random variables, and each term
is non-negative and is bounded by $\ell= \max_i \ell_i=  (1+o(1))\sqrt{n}$.
Define $b$ independent random variables
\[Y_i:= \frac{1}{\ell} \left(X_i(v) - c\frac{\sqrt{n}}{\log n}\right)
\]
for each $i$ with $L_i\ni v$.
We can apply Chernoff's inequality~\eqref{Chernoff} for any real $a>0$
%%%  \begin{equation}\label{Chernoff8}
\[   \Pr\left [\left(\sum _{L_i\ni v} Y_i \right)> a\right] < e^{-a^2/(2b)}.
\]
%%%   \end{equation}
Substituting $a:= \sqrt{4b \log n}$ the right hand side is $1/n^2$ and we get
%%%  \begin{equation}\label{Chernoff9}
\[   \Pr \left[X(v) > c \frac{b \sqrt{n}}{\log n}+  \ell\sqrt{4b\log n}\right] < \frac{1}{n^2}.
\]
%%%   \end{equation}
Since this is true for all $v\in [n]$, we obtain that (for large enough $n$) for any $c_2> c$
%%%  \begin{equation}\label{Chernoff99}
\[   \Pr \left[X(v) < c_2 \frac{n}{\log n}\text{ for all }v\right] > 1- \frac{1}{n},
\]
%%%   \end{equation}
 completing the proof of the upper bound for $\theta_0(G)$.
\end{proof}

Since the complement of a random graph $G\in \Gnp$ is a random graph from ${\mathcal G}(n, 1-p)$,
Theorem~\ref{th:thickness} and \eqref{eq:thickness2} imply that
\begin{corollary}\label{co:1}
For constant $0< p< 1$, there exist constants $c_{i, \Kne}=c_{i, \Kne}(p)>0$, $i=1,2$ such that for $G\in \Gnp$ with high probability
%%%  \begin{equation}\label{eq:randomKn}
\[   c_{1, \Kne} \frac{n}{\log n}< \fKn (G) < c_{2, \Kne}\frac{n}{\log n}.
\]
%%%  \end{equation}
\qed
  \end{corollary}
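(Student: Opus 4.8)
The plan is to derive Corollary~\ref{co:1} as an immediate consequence of Theorem~\ref{th:thickness} together with the sandwich inequality~\eqref{eq:thickness2}, $\theta_0(G) \leq \fKn(\overline{G}) \leq \theta_0(G)+1$. The only nontrivial observation needed is the one already stated: if $G \in \Gnp$ is a random graph with edge probability $p$, then its complement $\overline{G}$ is a random graph in ${\mathcal G}(n, 1-p)$, since each non-edge of $G$ is an edge of $\overline{G}$ independently with probability $1-p$. Since $0 < p < 1$ is constant, $1-p$ is also a constant strictly between $0$ and $1$, so Theorem~\ref{th:thickness} applies to $\overline{G}$.

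First I would invoke Theorem~\ref{th:thickness} with the constant $1-p$ in place of $p$: there exist constants $c_1 = c_1(1-p) > 0$ and $c_2 = c_2(1-p) > 0$ such that for $H \in {\mathcal G}(n, 1-p)$, with high probability $c_1 n/\log n < \theta_0(H) < c_2 n/\log n$. Applying this to $H = \overline{G}$ where $G \in \Gnp$, we get that with high probability $c_1 n/\log n < \theta_0(\overline{G}) < c_2 n/\log n$. Next I would feed these bounds into~\eqref{eq:thickness2} applied to the graph $\overline{G}$ (so that $\overline{\overline{G}} = G$): this gives $\theta_0(\overline{G}) \leq \fKn(G) \leq \theta_0(\overline{G}) + 1$. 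Combining, with high probability
\[
  c_1 \frac{n}{\log n} < \theta_0(\overline{G}) \leq \fKn(G) \leq \theta_0(\overline{G}) + 1 < c_2 \frac{n}{\log n} + 1.
\]
Finally I would absorb the additive $+1$ into the constant by choosing, say, $c_{1,\Kne} := c_1$ and $c_{2,\Kne} := 2c_2$ (or any constant exceeding $c_2$); for all sufficiently large $n$ the term $c_2 n/\log n + 1 < c_{2,\Kne} n/\log n$, and the claimed inequalities hold with high probability.

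There is essentially no obstacle here — the corollary is a purely formal combination of two already-established facts, and the whole argument is a two-line deduction. The only point requiring a moment's care is confirming that the complement operation preserves the random-graph model (true because the edges of $G$ are mutually independent, hence so are the non-edges) and that the roles of $p$ and $1-p$ can be swapped freely since both are fixed constants in $(0,1)$; all the real work was done in proving Theorem~\ref{th:thickness} (the upper bound via the linear-space decomposition and Chernoff, the lower bound via $\theta_0 \geq \Delta/(\omega-1)$) and in establishing~\eqref{eq:thickness2}.
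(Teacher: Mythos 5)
Your proof is correct and is exactly the paper's argument: the paper derives Corollary~\ref{co:1} in one line from Theorem~\ref{th:thickness} applied to $\overline{G}\in{\mathcal G}(n,1-p)$ together with the sandwich inequality~\eqref{eq:thickness2}, which is precisely what you do. Your only addition is the (correct) bookkeeping of absorbing the $+1$ into the constant, which the paper leaves implicit.
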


One can also prove a similar upper bound for the random bipartite graph.
\begin{corollary}\label{co:2}
For constant $0< p< 1$ there exist a constant $c_{3,\Kne}=c_{3, \Kne}(p)>0$ such that for $G\in \Gnnp$ with high probability
%%%  \begin{equation}\label{eq:randomKnn+}
\[    \fKn (G) < c_{3, \Kne}\frac{n}{\log n}.
\]
%%%   \end{equation}
  \end{corollary}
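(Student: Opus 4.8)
The plan is to reduce the statement to Theorem~\ref{th:thickness} via the inequality $\fKn(G)\le\theta_0(\overline G)+1$, which is exactly~\eqref{eq:thickness2} applied to $\overline G$ in place of $G$; so it suffices to show $\theta_0(\overline G)=O(n/\log n)$ with high probability. Write $G=G(V_1,V_2)\in\Gnnp$ and regard $\overline G$ as a graph on the $2n$ vertices $V_1\cup V_2$: inside $V_1$ and inside $V_2$ all pairs are edges of $\overline G$, while a pair $uv$ with $u\in V_1,v\in V_2$ is an edge of $\overline G$ exactly when $uv\notin E(G)$, independently with probability $q:=1-p$. The key step is a coupling. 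Let $\Gamma$ be the random graph on $V_1\cup V_2$ obtained from $\overline G$ by additionally inserting each within-$V_1$ pair and each within-$V_2$ pair independently with probability $q$, using fresh coin flips. Then $\Gamma$ is distributed as $\cG(2n,q)$. Now observe that every clique of $\Gamma$ is also a clique of $\overline G$: its within-part pairs are automatically edges of $\overline G$, and its between-part pairs are exactly the between-part edges of $\overline G$. Consequently, if $\cC$ is a clique cover of $\Gamma$ of thickness $\theta_0(\Gamma)$, then $\cC\cup\{K(V_1),K(V_2)\}$ is a clique cover of $\overline G$: the two extra cliques cover all within-part edges of $\overline G$, while $\cC$ covers all of $E(\Gamma)$, which contains every between-part edge of $\overline G$. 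Adding those two cliques raises each vertex degree in the cover by exactly one, so $\theta_0(\overline G)\le\theta_0(\Gamma)+1$.

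By Theorem~\ref{th:thickness} (with $2n$ in place of $n$ and $q$ in place of $p$), with high probability $\theta_0(\Gamma)<c_2(q)\cdot\frac{2n}{\log(2n)}$, whence
\[
\fKn(G)\ \le\ \theta_0(\overline G)+1\ \le\ \theta_0(\Gamma)+2\ <\ c_2(1-p)\cdot\frac{2n}{\log(2n)}+2\ <\ c_{3,\Kne}\cdot\frac{n}{\log n}
\]
for all large $n$, where one may take $c_{3,\Kne}:=3\,c_2(1-p)$. Since $\fKn(G)$ is a function of $G$ alone, while the bound $\fKn(G)\le\theta_0(\Gamma)+2$ holds for every realization of the extra coin flips, the probability over $G\in\Gnnp$ that $\fKn(G)\ge c_{3,\Kne}n/\log n$ is at most the probability over $\Gamma\in\cG(2n,q)$ that $\theta_0(\Gamma)\ge c_{3,\Kne}n/\log n-2$, which tends to $0$. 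This yields $\fKn(G)<c_{3,\Kne}\,n/(\log n)$ with high probability.

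The only genuinely new ingredient beyond Theorem~\ref{th:thickness} is this coupling, whose force is that the two ``free'' cliques $K(V_1),K(V_2)$ absorb all within-part edges of $\overline G$ at a cost of only $+1$ in thickness, collapsing the problem to bounding $\theta_0$ of a genuine $\cG(2n,q)$ random graph. The one point requiring care is the verification that cliques of the enlarged graph $\Gamma$ remain cliques of $\overline G$; everything else is bookkeeping. (If one prefers to avoid the coupling, one can instead re-run the linear-space argument from the proof of Theorem~\ref{th:thickness} directly on the bipartite neighbourhood structure, at the cost of a somewhat longer proof.)
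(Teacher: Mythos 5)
Your proposal is correct and is essentially the paper's own proof: your $\Gamma$ is exactly the paper's $\overline{H}$ (the paper builds $H=G\cup G_A\cup G_B$ with fresh within-part coins and takes complements), and the two extra cliques $K(V_1),K(V_2)$ costing $+1$ in thickness is the same step as "adding two complete graphs $K_A$ and $K_B$", followed by the same appeal to Theorem~\ref{th:thickness} with parameters $2n$ and $1-p$. The only nit is the phrasing of the coupling: $\overline G$ already contains \emph{every} within-part pair, so $\Gamma$ should be described as keeping the between-part edges of $\overline G$ and \emph{freshly sampling} the within-part pairs with probability $q$ (equivalently, deleting each with probability $p$), which is clearly what you use afterwards.
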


\begin{proof}
Let $G\in \Gnnp$ a random bipartite graph with partite sets $|A|=|B|=n$.
Consider a random graph $G_A\in {\mathcal G}(A, p)$ and $G_B\in {\mathcal G}(B, p)$, their union is
 $H=G\cup G_A\cup G_B$. We can consider $\overline{H}$ as a member of ${\mathcal G}(A\cup B, 1-p)$.
Since $\overline{G}$ can be obtained from $\overline{H}$ by adding two complete graphs $K_A$ and $K_B$, we obtain
\[    \fKn(G)-1 \leq \theta_0(\overline{G})\leq  \theta_0(\overline{H})+1 < 1+ c_2(1-p) \frac{2n}{\log (2n)},
\]
where the last inequality holds with high probability according to Theorem~\ref{th:thickness}.
\end{proof}

Recall~\eqref{eq:19-}, that for every graph $G$,
$ \rmin(G),\, \ravg(G), \, \rmax(G) \leq \fKn(G)$ holds.
These and the above two Corollaries imply the following upper bounds.

\begin{corollary}\label{co:3}
For constant $0< p< 1$ the following holds for $G\in \Gnp$ with high probability as $n\to \infty$
%%%   \begin{equation}\label{eq:Kn_up}
\[     \rmin (G), \,   \ravg (G), \,  \rmax (G) < c_{2, \Kne}(p)\frac{n}{\log n},
\]
%%%    \end{equation}
and similarly for $G\in \Gnnp$
%%%   \begin{equation}\label{eq:Knn_up}
\[   \rmin (G), \,   \ravg (G), \,  \rmax (G) < c_{3, \Kne}(p)\frac{n}{\log n}.
\]  %%%  \end{equation}
  \end{corollary}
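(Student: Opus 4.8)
\textbf{Proof proposal for Corollary~\ref{co:3}.}
The plan is to deduce both displayed inequalities directly from the deterministic comparison \eqref{eq:19-} together with the already established high-probability upper bounds on the Kneser rank. First I would recall that for \emph{every} graph $G$ one has the pointwise chain $\rmin(G),\ \ravg(G),\ \rmax(G)\le \fKn(G)$, which was recorded in \eqref{eq:19-} (it follows from the trivial observation that a Kneser representation of rank $k$ is simultaneously a $k$-min-, $k$-average-, and $k$-max-difference representation, since for disjoint $k$-sets both one-sided differences equal $k$, and for intersecting $k$-sets both are $<k$). Consequently, on any event on which $\fKn(G)<c\,n/\log n$ holds, all three parameters are also below $c\,n/\log n$.

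For $G\in\Gnp$, Corollary~\ref{co:1} gives that with high probability $\fKn(G)<c_{2,\Kne}(p)\,n/\log n$; intersecting this (single) high-probability event with nothing else, the chain above immediately yields $\rmin(G),\ravg(G),\rmax(G)<c_{2,\Kne}(p)\,n/\log n$ with high probability. For $G\in\Gnnp$, the same argument applies verbatim with Corollary~\ref{co:2} in place of Corollary~\ref{co:1}, giving the bound with constant $c_{3,\Kne}(p)$. No union bound over many events is needed here, since each of the two statements rests on a single whp event.

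There is essentially no obstacle: the corollary is a packaging of \eqref{eq:19-} with Corollaries~\ref{co:1} and~\ref{co:2}, and the only thing to be careful about is that the comparison \eqref{eq:19-} is genuinely deterministic, so monotonicity transfers the whp upper bound on $\fKn$ to the three difference-representation parameters without any loss. If one wished, one could alternatively obtain the $\Gnnp$ bound on $\rmin$ directly from the structural results of Section~3 (Lemma~\ref{l:noeltsBIP} and Claim~\ref{cl:M}) for the matching lower and upper thresholds, but routing through $\fKn$ via \eqref{eq:19-} and Corollary~\ref{co:2} is the shortest path and is what I would write.
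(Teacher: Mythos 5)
Your proposal is correct and matches the paper's own (very short) argument exactly: the paper likewise deduces the corollary by combining the deterministic inequality \eqref{eq:19-} with the high-probability bounds on $\fKn$ from Corollaries~\ref{co:1} and~\ref{co:2}. Your added justification of \eqref{eq:19-} for intersecting versus disjoint $k$-sets is accurate but not needed beyond what the paper already records.
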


\section{Prague dimension}

The {\em Prague dimension} (it is also called {\em product dimension}) $\fPra(G)$ of a graph $G$ is the smallest integer $k$ such that one can find   vertex distinguishing good colorings
 $\varphi_1, \dots, \varphi_k: V(G)\to \mathbb{N}$.
This means that $\varphi_i(u)\neq \varphi_i(v)$ for every edge $uv\in E(G)$ and $1\leq i\leq k$ but for every non-edge $\{ u,v\}$, 
 there exists an $i$ with $\varphi_i(u)=\varphi_i(v)$, 
  moreover the vectors $(\varphi_1(u), \varphi_2(u), \dots, \varphi_k(u))$ and  $(\varphi_1(v), \varphi_2(v), \dots, \varphi_k(v))$ are distict for $u\neq v$.  
Two vertices are adjacent if and only if their labels disagree in every $\varphi_i$.
As Hamburger, Por, and Walsh~\cite{HPW09} observed,  the Kneser rank never exceeds the Prague dimension, so one can extend~\eqref{eq:19-} as follows.
For every graph $G$
\begin{equation}\label{eq:19Pra}   \rmin(G),\, \ravg(G), \, \rmax(G) \leq \fKn(G)\leq \fPra(G).
\end{equation}
The determination of $\fPra(G)$ is usually a notoriously difficult task.
The results of Lov{\'a}sz,  Ne{\v{s}}et{\v{r}}il, and Pultr~\cite{LNPultr} were among the first (non-trivial) applications of the algebraic method.
Hamburger, Por, and Walsh~\cite{HPW09} observed that there are graphs where the difference of $\fPra(G)-\fKn(G)$
 is arbitrarily large, even for Kneser graphs ${\rm Kn}(s,k)$.
Poljak,  Pultr,  and R{\"o}dl~\cite{PoljakPultrRodl} proved  that $\fPra({\rm Kn}(s,k))= \Theta (\log \log s)$ (as $k$ is fixed and $s\to \infty$)
 while $\fKn({\rm Kn}(s,k))=k$ for all $s\geq 2k>0$.
Still we think that for most graphs these parameters have the same order of magnitude.

\begin{conj}\label{con:Pra}
For a constant probability $0< p< 1$ there exists a constant $c_{2,\rm Pra}=c_{2, \rm Pra}(p)>0$, such that for $G\in \Gnp$ with high probability
%%%  \begin{equation}\label{eq:randomPra}
\[  \fPra (G) < c_{2, \rm Pra}\frac{n}{\log n}.
\]
%%%  \end{equation}
\end{conj}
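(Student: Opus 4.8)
We sketch the route we would take toward Conjecture~\ref{con:Pra}; note first that for $G\in\Gnp$, by \eqref{eq:19Pra} and Corollary~\ref{co:1}, the matching lower bound $\fPra(G)>c_{1,\Kne}n/\log n$ already holds with high probability, so the conjecture would pin down $\fPra(G)=\Theta(n/\log n)$. The first move is to pass to the complement. A good coloring $\varphi_i$ of $G$ is precisely a partition $P_i$ of $V(G)$ into independent sets of $G$, that is, into cliques of $H:=\overline G$; and for a non-edge $\{u,v\}$ of $G$ one has $\varphi_i(u)=\varphi_i(v)$ if and only if the edge $\{u,v\}$ of $H$ lies inside a single part of $P_i$. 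Appending the all-singletons partition takes care of the ``distinct vectors'' clause for free. Hence, writing $q:=1-p$ (a constant) and noting $H\in{\mathcal G}(n,q)$,
\[
\fPra(G)\ \le\ 1+\min\bigl\{\,k:\ \text{there are partitions }P_1,\dots,P_k\text{ of }V(H)\text{ into cliques of }H\text{ with every edge of }H\text{ inside some part}\bigr\},
\]
and it suffices to produce, with high probability, such a family of size $O(n/\log n)$.

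The shape of the target is essentially forced. A partition of $V(H)$ into cliques of sizes $s_1,s_2,\dots$ covers at most $\sum_j\binom{s_j}{2}\le\tfrac12\,\omega(H)\,n$ edges, and with high probability $\omega(H)=(2+o(1))\log_{1/q}n$ while $e(H)=(1+o(1))\tfrac q2 n^2$. A quick count then shows that partitions into constant-size cliques cannot beat $\Omega(n)$ partitions, and that any $O(n/\log n)$-family covering $E(H)$ must use cliques of size $\Theta(\log n)$ that are edge-disjoint up to a bounded overlap. So the plan has to build something close to a \emph{resolvable near-decomposition of $H$ into near-maximum cliques}. The same logarithmic scale is also what makes this feasible: if $t=\Theta(\log n)$ then each vertex of $H$ lies in only $O(\deg_H(v)/t)=O(n/\log n)$ pairwise edge-disjoint copies of $K_t$.

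Concretely, fix a sufficiently small constant $c=c(q)>0$, put $t=\lfloor c\log n\rfloor$, and suppose we are handed a $K_t$-packing $\mathcal P$ of $H$ (pairwise edge-disjoint copies of $K_t$) covering every edge of $H$ outside a remainder $R\subseteq H$ with $\Delta(R)=O(n/\log n)$. View $\mathcal P$ as a hypergraph on $V(H)$ whose hyperedges are the $t$-element vertex-sets of its cliques. Since $\mathcal P$ is edge-disjoint, any two of its cliques meet in at most one vertex, so this hypergraph is linear; it is also nearly $\Delta$-regular with $\Delta=(1+o(1))qn/t=\Theta(n/\log n)$. Thus, by Pippenger--Spencer / Kahn type results on the chromatic index of nearly linear hypergraphs (in a version uniform in the growing edge-size $t$), it can be edge-colored with $(1+o(1))\Delta=O(n/\log n)$ colors; in other words $\mathcal P$ splits into $O(n/\log n)$ \emph{parallel classes}, each a vertex-disjoint sub-family of $K_t$'s. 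Extend each parallel class to a partition of $V(H)$ into cliques of $H$ by making each uncovered vertex a singleton. Finally, properly edge-color $R$ with $\Delta(R)+1=O(n/\log n)$ colors (Vizing); each color class is a matching inside $H$, which likewise extends to a partition of $V(H)$ into cliques of $H$ by singletons. Together with the all-singletons partition, this is an $O(n/\log n)$-family of partitions of $V(H)$ into cliques of $H$ covering $E(H)$, whence $\fPra(G)=O(n/\log n)$ with high probability.

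The main obstacle is the single input we assumed: a near-$K_t$-decomposition of the random graph $H=G_{n,q}$ with $t=\Theta(\log n)$ and remainder of maximum degree $O(n/\log n)$. The theory of near-optimal clique decompositions of dense quasirandom graphs (R\"odl's nibble, Pippenger--Spencer, Keevash, Glock--K\"uhn--Lo--Osthus) is developed for a \emph{fixed} clique size, where the relevant error terms are powers of constants and comfortably negligible. At the scale $t=\Theta(\log n)$ the copies of $K_t$ become rare and the slack in the nibble shrinks, so every estimate has to be re-derived uniformly in this growing $t$; moreover one needs the leftover to have maximum degree $O(n/\log n)$ rather than merely $o(n)$, which appears to call for an iterated (boosted) nibble with geometrically decreasing parameters, arranged so that the total number of parallel classes still telescopes to $O(n/\log n)$. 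Carrying out this quantitative bookkeeping --- in effect, producing a \emph{resolvable} near-optimal clique decomposition of $G_{n,q}$ at logarithmic clique size --- is exactly what is missing, and is why the statement is offered here only as a conjecture.
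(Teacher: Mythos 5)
The statement you are addressing is presented in the paper as a conjecture, and the paper offers no proof of it; the only things actually established there are the matching lower bound $\fPra(G)>c_{1,\Kne}\,n/\log n$ (from~\eqref{eq:19Pra} and Corollary~\ref{co:1}) and the reduction you rediscover: a good coloring of $G$ is a partition of $V$ into cliques of $\overline G$, so $\theta'_0(\overline G)\le\fPra(G)\le\theta'_0(\overline G)+1$, which is exactly the paper's observation that Conjecture~\ref{con:Pra} is equivalent to Conjecture~\ref{con:betterFR}. Up to that point your write-up agrees with the paper, and your heuristic that any $O(n/\log n)$-family of clique partitions must consist of near-resolvable packings by cliques of size $\Theta(\log n)$ is a correct and useful observation.

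Beyond the reduction, however, your argument is not a proof, and you say so yourself: the entire construction is conditional on being handed a $K_t$-packing of $H=G_{n,1-p}$ with $t=\Theta(\log n)$ covering all but a remainder of maximum degree $O(n/\log n)$. That input is precisely the open content of the conjecture. Two further steps you invoke are also not off-the-shelf: the Pippenger--Spencer/Kahn chromatic index results for nearly regular linear hypergraphs are proved for fixed (or at least bounded) edge size, and you would need a version uniform in $t\to\infty$ at the rate $t=\Theta(\log n)$, where the codegree and concentration estimates underlying the nibble degrade; and the near-decomposition literature (Keevash, Glock--K\"uhn--Lo--Osthus) likewise assumes fixed clique size, with error terms that are not obviously controllable when the cliques are only barely smaller than $\omega(H)\approx 2\log_{1/(1-p)}n$ and hence statistically rare. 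The paper itself points at the Frieze--Reed machinery as the likely route and still leaves the statement as a conjecture. So your proposal is a reasonable research plan whose first step coincides with the paper's own reduction, but it contains a genuine, self-acknowledged gap and does not establish the statement.
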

A matching lower bound   $ c_{1, \Kne} (n/(\log n))< \fPra (G)$ (with high probability) follows from~\eqref{eq:19Pra} and Corollary~\ref{co:1}.
We think the same order of magnitude holds for the case when $G$ is bipartite.

\begin{conj}\label{con:Pra2}
For a constant probability $0< p< 1$ there exists a constant $c_{3,\rm Pra}=c_{3, \rm Pra}(p)>0$, such that for $G\in \Gnnp$ with high probability
%%%   \begin{equation}\label{eq:randomPra2}
\[   \fPra (G) < c_{3, \rm Pra}\frac{n}{\log n}.
\]
%%%  \end{equation}
\end{conj}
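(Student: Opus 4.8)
The plan is to reduce Conjecture~\ref{con:Pra2} to a resolvable near-decomposition statement for the complement of a random bipartite graph and then attack that statement by a semi-random (nibble) construction. The vertex-distinguishing clause in the definition of $\fPra$ is almost free: if $\varphi_1,\dots,\varphi_m$ are good colorings of $G$ such that every non-edge is monochromatic under at least one of them, then adjoining the injective coloring $\varphi_{m+1}(v):=v$ (trivially a good coloring) makes all colour-vectors distinct, while conversely any Prague representation supplies such $\varphi_i$'s; hence $\mathrm{eq}(G)\le\fPra(G)\le\mathrm{eq}(G)+1$, where $\mathrm{eq}(G)$ denotes the least number of \emph{equivalence graphs} of $\overline G$ — vertex-disjoint unions of cliques of $\overline G$, i.e.\ of independent sets of $G$ — whose union is $\overline G$. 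So it suffices to prove $\mathrm{eq}(G)=O(n/\log n)$ with high probability for $G\in\Gnnp$; the matching lower bound is already contained in \eqref{eq:19Pra} together with Corollary~\ref{co:6}. For $G\in\Gnnp$ with parts $V_1,V_2$, the complement $\overline G$ is the edge-disjoint union of the cliques $K_{V_1},K_{V_2}$ and a random bipartite graph $B\subseteq V_1\times V_2$ of density $1-p$. The two cliques are cheap: $O(\log n)$ partitions of $V_i$ suffice for every pair in $V_i$ to lie together in one of them (group by the bits of an injective binary label, no two labels complementary), and each such partition, completed by singletons on the opposite side, is an equivalence graph of $\overline G$. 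Thus the task reduces to covering $E(B)$ by $O(n/\log n)$ equivalence graphs.

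An equivalence graph that covers any edge of $B$ is a vertex-disjoint union of \emph{bicliques} $S_1\cup S_2$ (with $S_i\subseteq V_i$ and $S_1\times S_2\subseteq E(B)$), which are exactly the independent sets of $G$ meeting both parts. With high probability the largest biclique of $B$ has both sides of size $q:=(1+o(1))\,2\log_{1/(1-p)}n$, so a single equivalence graph covers at most $q\cdot n=O(n\log n)$ edges of $B$, whereas $e(B)=\Theta(n^2)$. This re-proves $\mathrm{eq}(G)=\Omega(n/\log n)$ and, more importantly, shows that an $O(n/\log n)$-cover leaves little room: each of its $\Theta(n/\log n)$ equivalence graphs must be, up to a constant factor, a vertex-disjoint packing of $B$ by $q\times q$ bicliques (a near-$K_{q,q}$-factor), and collectively they must cover each edge of $B$ only $O(1)$ times. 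In particular no purely greedy construction can work: a process removing, in each round, a constant fraction of the \emph{remaining} edges of $B$ needs $\Theta(n)$ rounds — the coupon-collector factor $\log(1/\delta)$ appears once the leftover fraction is down to $\delta$ — which only recovers the trivial bound.

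Accordingly the plan is to build a near-$K_{q,q}$-factorization of $B$ by a R\"odl--nibble argument. Shrink $q$ slightly, to $(1-\varepsilon)\,2\log_{1/(1-p)}n$, and run $\Theta(n/\log n)$ passes: in each pass pick, by a random greedy rule, a near-perfect packing of the currently uncovered graph $B'$ by vertex-disjoint $q\times q$ bicliques, form the corresponding equivalence graph (padded with singletons), and delete its edges; maintain throughout that $B'$ stays quasirandom of the expected density. After $\Theta(n/\log n)$ passes the uncovered graph has maximum degree $O(\log n)$; a graph of maximum degree $d$ decomposes into $O(d)$ matchings, and a matching of $B$ is itself an equivalence graph, so $O(\log n)$ more equivalence graphs finish the cover. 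This yields $\mathrm{eq}(G)=O(n/\log n)$ and hence $\fPra(G)=O(n/\log n)$. Run on $\overline G$ with $q$-cliques in place of $q\times q$ bicliques, the same scheme would also settle Conjecture~\ref{con:Pra}.

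The main obstacle is precisely this near-factorization, for a \emph{dense} host and a block size $q$ that grows like $\log n$. One must propagate a quantitative quasirandomness statement — near-regularity on all $q\times q$ bipartite patterns — through $\Theta(n/\log n)$ rounds of structured edge deletion, and control the pair-degrees of the auxiliary hypergraph (vertices: the $\Theta(n^2)$ edges of $B$; hyperedges: the edge sets of admissible $q\times q$ bicliques) so that a Pippenger--Spencer/R\"odl-type packing theorem delivers the near-decomposition with the error control needed to drive the uncovered maximum degree down to $O(\log n)$. An off-the-shelf shortcut, if available for the quasirandomness parameters of $\Gnnp$ and for blocks of logarithmic size, would be a \emph{resolvable} near-decomposition theorem for pseudorandom graphs into bounded-size bicliques (or cliques), in the spirit of recent work on approximate and high-girth designs; it would give Conjectures~\ref{con:Pra2} and~\ref{con:Pra} at once. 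In every version the real content is the passage from an efficient clique cover of $\overline G$ — which the Frieze--Reed argument already produces — to an efficient \emph{resolvable} one.
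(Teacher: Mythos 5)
The statement you are proving is labelled a \emph{conjecture} in the paper: the authors do not prove it, and the only thing they offer is the reduction you also found, namely the identity $\theta'_0(H)\le \fPra(\overline{H})\le \theta'_0(H)+1$ relating the Prague dimension to the minimum chromatic index of a clique cover (equivalently, to the minimum number of equivalence graphs --- your $\mathrm{eq}$ --- needed to cover the complement), together with the remark that Conjecture~\ref{con:betterFR} would imply both Conjecture~\ref{con:Pra} and Conjecture~\ref{con:Pra2}. Your first paragraph reproduces exactly this reduction (your $\mathrm{eq}(G)$ is the paper's $\theta'_0(\overline{G})$), and your handling of the two cliques $K_{V_1},K_{V_2}$ with $O(\log n)$ bit-partitions is fine, as is the lower-bound count via the maximum biclique size. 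None of that is in dispute.

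The genuine gap is that everything after the reduction is a research programme, not a proof, and you say so yourself: ``the main obstacle is precisely this near-factorization.'' The entire content of the conjecture is the passage from an efficient clique cover of $\overline{G}$ (which Frieze--Reed provide, with $O(n^2/(\log n)^2)$ cliques) to an efficient \emph{resolvable} one, i.e.\ a partition of such a cover into $O(n/\log n)$ partial $K_{q,q}$-factors with $q=\Theta(\log n)$. Your nibble sketch does not supply this: you would need to (i) prove a near-perfect vertex-disjoint packing of a dense quasirandom bipartite graph by bicliques of \emph{logarithmic} (hence growing) size --- outside the regime of off-the-shelf Pippenger--Spencer/R\"odl packing theorems, whose codegree hypotheses you have not verified for this hypergraph --- and (ii) propagate quasirandomness of the uncovered graph through $\Theta(n/\log n)$ rounds of highly structured deletions, with error terms small enough that the leftover has maximum degree $O(\log n)$. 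Neither step is carried out, and both are exactly where the difficulty of Conjectures~\ref{con:Pra} and~\ref{con:Pra2} lives. So the proposal should be read as a plausible plan of attack on an open problem, consistent with the paper's own suggested route through Conjecture~\ref{con:betterFR}, but not as a proof; the only parts of it that are actually established are the reduction to $\mathrm{eq}(G)$ and the matching lower bound, which the paper already has via \eqref{eq:19Pra} and Corollary~\ref{co:1}.
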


\subsection{Prague dimension and clique coverings of graphs}

The {\em chromatic index} $\theta'_0(\cC)$ of a clique cover $\cC:= \{ C_1, \dots, C_m\}$ of the graph $G$ is 
 the chromatic index of the hypergraph $\cC$, i.e.,
 $\theta'_0(\cC)$ is the smallest $k$ that one can decompose the clique cover into $k$ parts,
$\cC=\cC_1\cup \dots \cup \cC_k$ such that the members of each $\cC_i$ are pairwise (vertex)disjoint.
The minimum chromatic index among the clique covers of $G$ is denoted by $\theta'_0(G)$.
In other words, $E(G)$ can be covered by $k$ subgraphs with complete graph components.
Obviously, the thickness is a lower bound $ \theta_0(G) \leq   \theta'_0(G)$.
Here the left hand side is at most $O(n/(\log n))$ for almost all graphs by Theorem~\ref{th:thickness}. 
We think that the Frieze--Reed~\cite{FriezeReed} method can be applied to find the correct order of magnitude of $\theta'_0$, too.

\begin{conj}\label{con:betterFR}
For $p$ constant, $0< p<1$, there exists a constants $c_4=c_4(p)>0$ such that for $G\in \Gnp$ with high probability
%%%  \begin{equation}\label{eq:FRjobb}
\[     \theta'_0(G) <     c_4\frac{n}{\log n}.
\]
%%%   \end{equation}
  \end{conj}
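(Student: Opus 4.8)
We sketch a plausible route to Conjecture~\ref{con:betterFR}, reusing the clique cover built in the proof of Theorem~\ref{th:thickness}; the new content would be a bound on its \emph{chromatic index} rather than merely its thickness. Fix a linear space $\cL_n$ on the point set $[n]$ as in Lemma~\ref{l:linear}, with lines $L_1,\dots,L_m$, $m=n+o(n)$, $\ell_i=|L_i|=(1+o(1))\sqrt n$, and every point on $(1+o(1))\sqrt n$ lines. Since each edge of $K_n$ lies on exactly one line, the induced subgraphs $G_i=G[L_i]$ are mutually independent random graphs $G_i\in\cG(L_i,p)$ and they determine $G\in\Gnp$. For each $i$ choose a minimum clique cover $\cC_i$ of $G_i$ (so, as in the proof of Theorem~\ref{th:thickness}, the local thickness satisfies $X_i(v)\le\ell_i$ at every point), and set $\cC=\bigcup_i\cC_i$, a clique cover of $G$; it then suffices to bound $\theta'_0(G)\le\theta'_0(\cC)$, the chromatic index of the hypergraph $\cC$.

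Two features of $\cC$ would drive the argument. First, the degree of $v$ in $\cC$ is $X(v)=\sum_{L_i\ni v}X_i(v)$, so the Chernoff estimate concluding the proof of Theorem~\ref{th:thickness} gives, with high probability and for all $v$ at once, $\Delta(\cC)=\max_v X(v)\le c_2\,n/\log n$; on the other hand $\Delta(\cC)\ge\avg_v X(v)=\tfrac1n\sum_i\sum_{C\in\cC_i}|C|\ge\tfrac2n\sum_i\theta_1(G_i)=\Omega(n/(\log n)^2)$ by the Frieze--Reed lower bound. Second --- the new point --- a clique of $\cC$ containing two distinct vertices $u,v$ lies inside a single line, which must then be the unique line $L(u,v)$; hence the codegree of $u$ and $v$ in $\cC$ equals their codegree in $\cC_{L(u,v)}$, which is at most $X_{L(u,v)}(v)\le\ell_{L(u,v)}=(1+o(1))\sqrt n=o(\Delta(\cC))$. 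Thus $\cC$ is a clique-cover hypergraph of maximum degree $D=\Delta(\cC)$ with $D\to\infty$ and all codegrees $O(D^{1/2+o(1)})$.

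If one could now invoke a Pippenger--Spencer / Kahn type theorem --- a hypergraph with edge sizes bounded, maximum degree $D\to\infty$, and all codegrees $o(D)$ has chromatic index $(1+o(1))D$ --- one would get $\theta'_0(G)\le\theta'_0(\cC)=(1+o(1))D\le(1+o(1))c_2\,n/\log n$, so any $c_4>c_2$ would do; together with $\theta'_0(G)\ge\theta_0(G)$ and Theorem~\ref{th:thickness} this would also pin down the order of magnitude. The same scheme applied to $\overline G\in\cG(n,1-p)$ would settle Conjecture~\ref{con:Pra} through $\fPra(\overline G)\le\theta'_0(G)+O(1)$ (a decomposition of a clique cover of $G$ into disjoint clique families is, after adding singletons, a family of proper colourings of $\overline G$ covering all its non-edges, and one injective colouring extra makes the label vectors distinct).

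The serious obstacle is the clique sizes. An optimal clique cover of a dense random graph uses cliques of \emph{average} size $\Theta(\log n)$, so $\cC$ is essentially $\Theta(\log n)$-uniform while $D=\Theta(n/\log n)$; that is, the uniformity is of the same order as $\log D$, the boundary of the range where the semi-random (``nibble'') method yields the clean $(1+o(1))D$ bound. Pushing the argument into this regime would have to exploit the unusually small codegree $O(D^{1/2+o(1)})$ of $\cC$ (so that the hypergraph is nearly linear), and would likely require first regularising --- padding every clique to a common size with private dummy vertices, adding dummy edges on fresh dummy vertices to equalise the degrees, and checking that codegrees stay $o(D)$. A softer target is already within reach: replacing the covers $\cC_i$ by covers using cliques of size at most $s=o(\log n)$ makes $\cC$ genuinely $s$-uniform with $\Delta(\cC)=\Theta(n/s)$ and $s=o(\log D)$, so the nibble applies and yields $\theta'_0(G)=O(n/s)$, i.e. $\theta'_0(G)\le n\,g(n)/\log n$ for every $g(n)\to\infty$ --- just short of what Conjecture~\ref{con:betterFR} demands. (One must also remember that \eqref{eq:FR2} bounds $\theta_1(G_i)$ only in expectation, so a sparse set of ``heavy'' vertices with atypically large $X_i(v)$ should be removed beforehand and handled with $o(n/\log n)$ extra colours.)
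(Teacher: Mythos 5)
This statement is a \emph{conjecture} in the paper; the authors give no proof of it (they only remark that they ``think the Frieze--Reed method can be applied''), so there is no argument of theirs to compare yours against. What you have written is the natural plan of attack, and you assemble the right ingredients correctly: the decomposition of $K_n$ along the linear space of Lemma~\ref{l:linear}, the whp bound $\Delta(\cC)=O(n/\log n)$ from the proof of Theorem~\ref{th:thickness}, the observation that any clique of $\cC$ through two vertices $u,v$ lives inside the unique line $L(u,v)$ so that all codegrees are $O(\sqrt n)=o(\Delta(\cC))$, and the standard translation $\fPra(\overline G)\le \theta'_0(G)+O(1)$. None of this is wrong.

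But the proposal is not a proof, and the gap is exactly the one you name yourself: the step ``if one could now invoke a Pippenger--Spencer / Kahn type theorem.'' Those theorems give chromatic index $(1+o(1))D$ for hypergraphs of \emph{bounded} (or very slowly growing) edge size with codegrees $o(D)$; here an economical clique cover of $\cG(L_i,p)$ necessarily uses cliques of size $\Theta(\log n)$ (smaller cliques force $\theta_0=\omega(n/\log n)$, as your own fallback computation with $s=o(\log n)$ shows), while $D=\Theta(n/\log n)$, so the uniformity is of order $\log D$ --- precisely the regime where the semi-random method is not known to return $O(D)$, let alone $(1+o(1))D$. The small codegree does not rescue the standard statements, and your $s$-uniform variant provably lands at $\omega(n/\log n)$, strictly short of the conjectured bound. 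So the missing idea is the entire content of the conjecture: one must colour a $\Theta(\log n)$-uniform cover hypergraph of degree $\Theta(n/\log n)$ with $O(n/\log n)$ colours, which requires constructing the cover and the colouring \emph{together} rather than covering first and colouring afterwards. (This is in fact how the conjecture was eventually settled, by Guo, Patten and Warnke, via a considerably more delicate nibble that builds the clique partitions adaptively.) As it stands, your argument establishes only $\theta'_0(G)\le n\,g(n)/\log n$ for any $g(n)\to\infty$, which does not imply the statement.
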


One can observe that (similarly as $\fKn$ and $\theta_0$ are related, see~\eqref{eq:thickness2})
 there is a remarkable simple connection between Prague dimension and $\theta'_0$.
%%% \begin{equation}\label{eq:thickness2'}
\[   \theta'_0(G)\leq   \fPra(\overline{G})  \leq \theta'_0(G)+1.
 \]
%%%   \end{equation}
So Conjectures~\ref{con:Pra} and~\ref{con:betterFR} are in fact equivalent, and Conjecture~\ref{con:betterFR}
   also implies Conjecture~\ref{con:Pra2}.

\section{Conclusion}

We have considered five graph functions  $\rmin(G)$, $\ravg(G)$, $\rmax(G)$, $\fKn(G)$, and $\fPra(G)$,  which are {\em hereditary} (monotone  for induced subgraphs) and two random graph models $\Gnp$ and $\Gnnp$. 
We gave an upper bound for the order of magnitude for eight of the possible ten problems, and we have also have 
 conjectures for the missing two upper bounds (Conjectures~\ref{con:Pra} and~\ref{con:Pra2}).
We also established matching lower bounds in seven cases, which also gave probably the best lower bound in two more cases (concerning $\fPra$).
All of these 19 estimates were $\Theta (n/(\log n))$. In the last case (in Corollary~\ref{co:probversion}) we have a weaker bound, 
 so it is natural to ask that

\begin{prob}%\label{}
Is it true that for any fixed $0<p<1$ for $G\in \Gnp$ with high probability one has
$\Omega (n/(\log n))\leq \rmin (G)$?
  \end{prob}

Let us remark that if $G$ is a complement of a triangle-free graph then  
   the Kneser rank and Prague dimension is $\Delta(\overline{G})$ or $\Delta(\overline{G})+1$.
So  it can be $\Omega(n)$. 
For example, $\fKn(\overline{K_{1,n-1}})=n-1$. 
No such results are known for $\rmin$. 

\begin{prob}
What is the maximum of $\rmin(G)$ over the set of $n$-vertex graphs $G$? 
Is it true that $f_{\rho}(G)= o(n)$ for every $\rho\in \{ \min, \avg, \max\}$ and $G \in \Gn \cup \Gnn$?
\end{prob}

% \newpage
% $\lhd$

%%%   \small

%\bibliographystyle{siam}
%\bibliography{ida_Dec_12}

\end{document}